\newtheorem{thm}{Theorem}[section]
\newcommand{\n}{\noindent}
\newtheorem{lemma}[thm]{Lemma}
\newtheorem{prob}[thm]{Problem}
\newenvironment{proof}{{\bf Proof}.}{\rule{3mm}{3mm}}
\begin{document}

\title{Realizing degree sequences as $Z_3$-connected graphs}
\author{Fan Yang$^1$~\thanks{Supported by the Natural Science Foundation of China (11326215)}, Xiangwen Li$^2$~\thanks{Supported by the
Natural Science Foundation of China (11171129)  and by Doctoral Fund of Ministry of Education of China (20130144110001)}\thanks{Corresponding author: xwli68@mail.ccnu.edu.cn}, Hong -Jian Lai$^3$\\
$^1$Department of Mathematics and Physics, Jiangsu University of Science and Technology, \\Zhenjiang 212003,
 China\\
$^2$Department of Mathematics, Huazhong Normal
University, Wuhan 430079, China\\
$^3$Department of Mathematics, West Virginia University,
Morgantown, WV 26506, USA}

\date{}
\maketitle

\begin{abstract}
An integer-valued sequence $\pi=(d_1, \ldots, d_n)$ is  {\em graphic} if
there is a simple graph $G$ with degree sequence of $\pi$. We say the $\pi$ has a realization $G$. Let $Z_3$ be a cyclic group of order three. A graph $G$ is {\em $Z_3$-connected} if for every mapping $b:V(G)\to Z_3$ such that $\sum_{v\in V(G)}b(v)=0$, there is an orientation of $G$  and a mapping $f: E(G)\to Z_3-\{0\}$ such that for each vertex $v\in V(G)$, the sum of  the values of $f$ on all the edges leaving from $v$ minus the sum of the values of $f$ on the all edges coming to $v$ is equal to $b(v)$.  If  an integer-valued sequence $\pi$ has a realization $G$ which is $Z_3$-connected, then $\pi$ has a {\em $Z_3$-connected realization} $G$.
Let $\pi=(d_1, \ldots, d_n)$ be a graphic sequence with $d_1\ge \ldots \ge d_n\ge 3$.
We prove in this paper that if $d_1\ge n-3$, then either $\pi$ has a $Z_3$-connected realization
unless the sequence is $(n-3, 3^{n-1})$ or is $(k, 3^k)$ or $(k^2, 3^{k-1})$ where $k=n-1$ and $n$ is even;  if $d_{n-5}\ge 4$, then
either $\pi$ has a $Z_3$-connected realization unless the sequence is
 $(5^2, 3^4)$ or $(5, 3^5)$.
\end{abstract}

\section{Introduction}

Graphs here are finite, and may have multiple edges without
loops. We follow the notation and terminology in~\cite{BM76} except otherwise stated.

For a given orientation of a graph  $G$, if an edge $e\in E(G)$ is
directed from a vertex $u$ to a vertex $v$, then $u$ is the {\em tail} of
$e$ and $v$ is the {\em head} of $e$. For a vertex $v\in V(G)$, let
$E^+(v)$ and $E^-(v)$ denote the sets of all edges having tail $v$ or head $v$, respectively. A graph $G$ is {\it $k$-flowable} if all the edges of $G$ can be oriented
and assigned nonzero numbers  with absolute value less than $k$ so that
for every vertex $v\in V(G)$, the sum of the values on all the edges in $E^+(v)$ equals that of the values of
all the edges in $E^-(v)$. If $G$ is  $k$-flowable  we also say that $G$ admits  a nowhere-zero $k$-flow.

Let $A$ be an abelian group with  identity 0, and
let $A^*=A-\{0\}$.
Given an orientation and a mapping  $f: E(G)\rightarrow A$, the {\em boundary} of $f$ is a
function $\partial f: V(G)\rightarrow A$ defined by, for each vertex $v\in V(G)$,
\[
\partial f(v)=\sum\limits_{e\in E^{+}(v)} f(e)- \sum\limits_{e\in
E^-(v)} f(e),
\]
where ``$\sum$" refers to the addition in $A$.

A mapping $b: V(G)\rightarrow
A$ is a {\em zero-sum function} if $\sum\limits_{v\in V(G)} b(v)=0$.
A graph $G$ is {\em $A$-connected} if for every zero-sum function $b: V(G)\rightarrow
A$, there exists an
orientation of $G$ and a mapping  $f: E(G)\rightarrow A^*$ such that
$\partial f(v)=b(v)$ for each $v\in V(G)$.

The concept of $k$-flowability was first introduced by Tutte~\cite{Tu54}, and this theory  provides an interesting way to investigate the coloring of planar graphs in the sense that Tutte~\cite{Tu54} proved a classical theorem: a planar graph is $k$-colorable if and only if it is $k$-flowable. Jaeger {\em et al.}~\cite{Jaeger92} successfully generalized nowhere-zero flow problems to group connectivity. The purpose of study in group connectivity is to characterize contractible configurations for integer flow problems. Let $Z_3$ be a cyclic group of order three.
Obviously, if $G$ is $Z_3$-connected, then $G$ is
$3$-flowable.

An integer-valued sequence $\pi=(d_1, \ldots, d_n)$ is {\em graphic} if
there is a simple graph $G$ with degree sequence $\pi$. We say $\pi$ has  a {\em realization} $G$, and we also say $G$ is a realization of $\pi$.
If an integer-valued sequence $\pi$ has a realization $G$ which is  $A$-connected, then we say that $G$ is a $A$-connected realization of $\pi$ for an abelian group $A$. In particular, if $A=Z_3$,  then $G$ is a $Z_3$-connected realization of $\pi$, and we also say that $\pi$ has a $Z_3$-connected realization $G$.
In this paper, we write every degree sequence $(d_1,  \ldots, d_n)$ is in nonincrease order. For simplicity,
we use exponents to denote degree multiplicities, for example, we
write $(6, 5, 4^4, 3)$ for $(6, 5, 4, 4, 4, 4, 3)$.

The problem of realizing degree sequences by
graphs  that have nowhere-zero flows or are  $A$-connected, where $A$ is an abelian group, has been studied.  Luo {\em et al.} \cite{Zhang04}  proved that every bipartite graphic sequence with least element at least 2 has a 4-flowable realization. As a corollary, they confirmed the simultaneous edge-coloring conjecture of Cameron \cite{Ca}.
 Fan {\em et al.} \cite{Lai08} proved that every degree sequence with least element at least 2 has a realization which contains a spanning eulerian subgraph; such graphs is  4-flowable.
 Let $A$ be an abelian group with $|A|=4$.  For a nonincreasing $n$-element graphic sequence $\pi$ with least element at least 2 and sum at least $3n-3$,  Luo {\em et al.} \cite{Yu12} proved that $\pi$ has a realization that is $A$-connected. Yin and Guo \cite{Yin13} determined the smallest degree sum that yields graphic sequences with a $Z_3$-connected realization. For  the literature for this topic, the readers can see a survey \cite{LLSZ10}. In particular, Luo {\em et al.}\cite{Zhang08} completely answered the question of Archdeacon~\cite{DA}: Characterize all graphic sequences $\pi$  realizable by a 3-flowable graph. The natural group connectivity version of Archdeacon's problem is as follows.
\begin{prob}
\label{pro1}
Characterize all graphic sequences $\pi$  realizable by a $Z_3$-connected graph.
\end{prob}

On this problem,
 Luo {\em et al.} {\rm \cite{Zhang08}} obtained the next two results.

\begin{thm}
\label{th12}
Every nonincreasing graphic sequence $(d_1, \ldots, d_n)$  with $d_1=n-1$ and $d_n\ge 3$
 has a $Z_3$-connected realization unless $n$ is even and the sequence is $ (k, 3^k)$ or $(k^2, 3^{k-1})$, where $k=n-1$.
\end{thm}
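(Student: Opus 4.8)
\medskip
\noindent\textbf{Proof proposal.}
I would argue by induction on $n$, using three standard facts about $Z_3$-connectivity. (1) \emph{Reduction lemma}: if $H$ is a $Z_3$-connected subgraph of a graph $L$, then $L$ is $Z_3$-connected if and only if $L/H$ is. (2) \emph{Monotonicity}: if a graph has a spanning $Z_3$-connected subgraph then it is itself $Z_3$-connected (given a zero-sum function $b$, orient and assign arbitrary nonzero values to the edges outside the subgraph, correct $b$ by their contribution to obtain another zero-sum function, and solve on the spanning subgraph). (3) \emph{Gadgets}: $K_5$, the graph obtained from $K_4$ by doubling an edge, and every even wheel are $Z_3$-connected, whereas odd wheels are not. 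Since the graphs are simple and $d_1=n-1$, in every realization of $\pi$ the vertex $v_1$ of degree $n-1$ is adjacent to all others, so every realization is $v_1\vee G'$ — where $v_1\vee G'$ denotes $G'$ together with a new vertex $v_1$ joined to all of $V(G')$ — and $G'$ realizes the laid-off sequence $\pi'=(d_2-1,\ldots,d_n-1)$; here $\pi'$ is graphic with every term at least $2$, and we may pass among its realizations by $2$-switches.

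\medskip
\noindent\textbf{The exceptions are genuine.}
For the ``only if'' direction I would use that $Z_3$-connectivity implies a nowhere-zero $3$-flow, hence there is an orientation in which every vertex of degree $3$ is a source or a sink (for a degree-$3$ vertex the difference of out- and in-degree is $\pm1$ or $\pm3$, and being $\equiv0\pmod 3$ forces $\pm3$); consequently the subgraph induced by the degree-$3$ vertices is bipartite in any putative $Z_3$-connected realization. In a realization of $(k,3^k)$ the degree-$3$ vertices induce a $2$-regular graph on $k$ vertices, which cannot be bipartite because $k=n-1$ is odd. The unique realization of $(k^2,3^{k-1})$ consists of two mutually adjacent universal vertices joined to a perfect matching on the remaining $k-1$ vertices (possible only when $k$ is odd), and pairing sources with sinks along each matching edge forces the in- and out-degrees of $v_1$ to differ by exactly $1$, again a contradiction.

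\medskip
\noindent\textbf{Constructing a good realization.}
For the ``if'' direction, assume $\pi$ is none of the listed sequences and let $t$ be the number of terms of $\pi$ exceeding $3$. If $t\le1$ then $\pi=(n-1,3^{n-1})$ with $n$ odd, so $n-1$ is even and the even wheel on $n$ vertices is a $Z_3$-connected realization. If $t\ge2$, I would realize $\pi'$ by a graph $G'$ containing an even cycle $C$ of length $2j\ge4$ through the vertices of largest degree (a $2$-switch argument installs such a $C$ whenever $\pi$ is not too close to an exceptional sequence); then $C$ together with $v_1$ spans an even wheel $W_{2j}$, which is $Z_3$-connected, and contracting it leaves a graph again having a universal vertex, in which the surviving vertices keep their original degrees while the cone vertex has degree one less than the new order — a strictly smaller, non-exceptional instance to which the induction hypothesis applies. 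The borderline sequences — those agreeing with $(n-1,3^{n-1})$, $(n-1,n-1,3^{n-2})$, or with an exceptional sequence in all but one or two coordinates — admit no such $C$ and would be treated one by one by writing down an explicit realization (for instance two universal vertices over a near-perfect matching plus one extra edge, or a wheel in which two consecutive spoke-triangles are merged into a $W_4$) and verifying $Z_3$-connectivity directly via the reduction lemma.

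\medskip
\noindent\textbf{Main obstacle.}
The generic range, where $\pi$ has several terms at least $4$, goes through smoothly once the even-cycle/contraction step is set up. The crux, I expect, is the bookkeeping for the finitely many near-extremal sequences lying within a couple of $2$-switches of $(k,3^k)$ and $(k^2,3^{k-1})$: for those one must simultaneously produce a valid realization of $\pi'$, guarantee that the contracted piece really is $Z_3$-connected, and confirm that the reduced instance is not itself one of the forbidden sequences — and it is precisely these cases that resist the generic argument and need tailor-made realizations rather than a single deep idea.
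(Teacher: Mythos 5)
First, a point of reference: this paper does not prove Theorem~\ref{th12}; it is quoted verbatim from Luo, Xu, Zang and Zhang \cite{Zhang08}, so your attempt can only be measured against the standard argument and against the analogous inductions this paper does carry out (Lemmas~\ref{41} and~\ref{51}). Your necessity direction is correct and complete: the observation that a modular $3$-orientation makes every $3$-vertex a source or a sink, hence that the $3$-vertices induce a bipartite graph, is exactly the device the present paper uses in Lemma~\ref{34}; the $2$-regular induced subgraph on an odd number of vertices for $(k,3^k)$, and the unique realization of $(k^2,3^{k-1})$ with the matching forcing $|E^+(v_1)|-|E^-(v_1)|=\pm 1$, are both right.

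The sufficiency direction has a genuine gap beyond the borderline cases you already flag. After contracting the even wheel $W_{2j}$ spanned by $v_1$ and the cycle $C$, you apply the induction hypothesis to the quotient, but the quotient is in general a multigraph and is not a realization of any graphic sequence in the sense of the theorem: a $3$-vertex outside $C$ with two neighbours on $C$ retains multiplicity-degree $3$ but simple degree $1$ after contraction, so neither the statement being proved nor a $2$-switch argument applies to it. What is actually needed at that point is a direct verification that the quotient is $Z_3$-connected, via parts (6) and (8) of Lemma~\ref{le21} (absorbing the remaining vertices one at a time, each sending at least two edges into the contracted part), and that verification is not automatic for $3$-vertices with only one edge into the wheel. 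This is precisely why the published proof, and the parallel proofs of Lemmas~\ref{41} and~\ref{51} here, induct instead on the residual sequence $\bar{\pi}$ of Theorem~\ref{th20} -- deleting a minimum-degree vertex keeps every intermediate object a simple realization of a graphic sequence and reattaches the deleted vertex by Lemma~\ref{le21}(8) -- and then dispose of the base and near-extremal sequences by the explicit wheel-plus-gadget constructions of Section 3. Your plan would need either that restructuring or a worked-out absorption argument for the quotient, together with the deferred case analysis of sequences close to $(k,3^k)$ and $(k^2,3^{k-1})$, before it could be called a proof.
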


\begin{thm}
\label{th13}
Every nonincreasing graphic sequence $(d_1,  \ldots, d_n)$  with
 $d_n\ge 3$ and $d_{n-3}\ge 4$ has a $Z_3$-connected realization.
\end{thm}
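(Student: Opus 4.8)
The plan is to argue by induction on $n$. Throughout we would use a few standard facts about $Z_3$-connectivity: $K_4$, $K_5$, $K_{3,3}$ and every even wheel are $Z_3$-connected; adding an edge (even a parallel one) to a $Z_3$-connected graph preserves the property; and if $H$ is a $Z_3$-connected subgraph of $G$ and the contraction $G/H$ is $Z_3$-connected, then so is $G$. Since $d_{n-3}\ge 4$, at most three terms of $\pi$ equal $3$. If $d_1=n-1$, then for $n$ large each exceptional sequence of Theorem~\ref{th12} has at least four terms equal to $3$ and so fails $d_{n-3}\ge 4$; hence Theorem~\ref{th12} already supplies a $Z_3$-connected realization, and we may assume $d_1\le n-2$. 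A finite check would dispose of all $\pi$ with $n\le n_0$ for a suitable small constant $n_0$: only finitely many sequences satisfy the hypotheses, and each can be handled directly, typically by exhibiting a realization containing one of $K_4,K_5,K_{3,3},W_4$ whose contraction is a small $Z_3$-connected multigraph or $K_1$.

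For $n>n_0$ the idea is a \emph{gadget reduction}. We would choose four indices $i_1,\dots,i_4$, to include all of the at most three terms equal to $3$ and otherwise terms of degree at least $4$, and form the shorter sequence $\pi'$ obtained from $\pi$ by deleting $d_{i_1},\dots,d_{i_4}$ and inserting the single term $m:=d_{i_1}+d_{i_2}+d_{i_3}+d_{i_4}-12$. One then checks that (a) $\pi'$ is graphic, (b) $\pi'$ has minimum term at least $3$ and at most three terms equal to $3$, and (c) $\pi'$ has $n-3$ terms. By the induction hypothesis $\pi'$ has a $Z_3$-connected realization $G'$; we take the vertex $w$ of $G'$ of degree $m$, replace it by four new vertices spanning a $K_4$, and distribute the $m$ edges formerly incident with $w$ among the four new vertices so that they receive degrees $d_{i_1},\dots,d_{i_4}$. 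The resulting simple graph $G$ realizes $\pi$, contains this $K_4$ as a $Z_3$-connected subgraph, and satisfies $G/K_4=G'$, so $G$ is $Z_3$-connected by the contraction lemma; this is the desired realization of $\pi$, completing the induction.

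The main difficulty will be to show that a legal choice of $i_1,\dots,i_4$ always exists once $n>n_0$. The merged term must satisfy $3\le m\le n-4$ (otherwise $\pi'$ has a vertex of degree below $3$, or of degree greater than $n-4$, which is impossible in a simple graph on $n-3$ vertices), and the reduction must not create more than three terms equal to $3$; when two or three terms of $\pi$ equal $3$ and the maximum degree is small these constraints are tight, and one would split into cases on the number of $3$'s and on the value of $d_1$, verifying the graphicality of $\pi'$ in each case by a routine Erd\H{o}s--Gallai (or Kleitman--Wang) computation. A secondary point is the parity obstruction that odd wheels are not $Z_3$-connected, which forces the gadget to be taken as $K_4$ (or $K_5$, $K_{3,3}$) rather than an arbitrary wheel; using $K_4$, whose three non-dominating vertices can absorb all the degree-$3$ terms of $\pi$, is precisely what makes the bookkeeping in (b) work.
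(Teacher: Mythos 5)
First, a point of reference: the paper does not prove Theorem~\ref{th13} at all. It is quoted verbatim from Luo, Xu, Zang and Zhang \cite{Zhang08}, so there is no in-paper proof to compare against. Judged on its own terms, however, your proposal has a fatal flaw.

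Your entire reduction hinges on the claim that $K_4$, $K_5$, $K_{3,3}$ and every even wheel are $Z_3$-connected, and on using $K_4$ as the expansion gadget so that $G/K_4=G'$ forces $G$ to be $Z_3$-connected via the contraction lemma. But $K_4$ is \emph{not} $Z_3$-connected: $K_4=W_3$ is an odd wheel, and part (5) of Lemma~\ref{le21} states that no odd wheel is $Z_3$-connected (part (2) of the same lemma deliberately starts at $n\ge 5$ for complete graphs). Likewise $K_{3,3}$ is not $Z_3$-connected by part (4) of Lemma~\ref{le21}; indeed the proof of Lemma~\ref{34} in this very paper uses the failure of $K_{3,3}$ to kill a candidate realization. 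You even invoke the odd-wheel obstruction yourself as the reason to prefer $K_4$, apparently without noticing that $K_4$ \emph{is} the odd wheel $W_3$. Since the gadget $H$ must itself be $A$-connected for part (6) of Lemma~\ref{le21} to apply, the step ``$G$ contains this $K_4$ as a $Z_3$-connected subgraph, hence $G$ is $Z_3$-connected'' is simply false, and the induction collapses. The only surviving gadget on your list is $K_5$ (or $K_4^-$ with a doubled edge, or an even wheel), and redesigning the argument around a five-vertex gadget changes all of the degree bookkeeping you describe.

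A secondary gap: for dense sequences the required merged degree $m=d_{i_1}+\cdots+d_{i_4}-12\le n-4$ can fail for \emph{every} choice of four indices (already for $\pi=((n-1)^n)$ one has $m=4n-16>n-4$), so the reduction is not always available and you would need a separate argument for that regime; you flag the constraint but do not resolve it. For what it is worth, the technique actually used throughout this paper (and in \cite{Zhang08}) is the opposite and much lighter reduction: delete a single vertex of minimum degree $3$ to form the residual sequence $\bar\pi$ of Theorem~\ref{th20}, apply induction to get a $Z_3$-connected $\bar G$, and reattach the deleted vertex with three edges using part (8) of Lemma~\ref{le21}, which only needs two edges into a $Z_3$-connected graph and requires no gadget to be $Z_3$-connected at all.
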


Motivated by Problem~\ref{pro1} and  the results above,  we present the following two theorems in this paper. These results extend the results of \cite{Zhang08}
by extending the characterizations to a large set of sequences.

\begin{thm}
\label{th1} A nonincreasing graphic sequence $(d_1,  \ldots, d_n)$  with
 $d_1\ge n-3$ and $d_n\geq 3$
 has a $Z_3$-connected realization
unless the sequence is $(n-3, 3^{n-1})$ for any $n$ or is $(k, 3^k)$ or $(k^2, 3^{k-1})$, where $k=n-1$ and $n$ is even.
\end{thm}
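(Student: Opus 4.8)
The plan is to prove Theorem~\ref{th1} by induction on $n$, peeling off one cubic vertex at a time. Theorems~\ref{th12} and~\ref{th13} supply the starting point: if $d_1=n-1$ we are done by Theorem~\ref{th12}, and if $d_{n-3}\ge 4$ we are done by Theorem~\ref{th13}; moreover if $d_1=n-3$ and $d_2=3$ then $\pi=(n-3,3^{n-1})$, the excluded sequence. So I may assume
$$d_1\in\{n-2,n-3\},\qquad d_{n-3}=d_{n-2}=d_{n-1}=d_n=3,\qquad \pi\ne(n-3,3^{n-1}),$$
and, when $d_1=n-3$, also $d_2\ge4$. One checks $n\ge 6$ and $d_1\ge4$, so $v_1$ has degree at least $4$; since $v_1$ misses at most two vertices while at least four vertices are cubic, $v_1$ has a cubic neighbour. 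I will use the following standard facts about $Z_3$-connectivity (all in the circle of ideas of~\cite{Zhang08}): $K_1$ is $Z_3$-connected; adding edges to a $Z_3$-connected graph preserves $Z_3$-connectivity; if $H$ is a $Z_3$-connected subgraph of $G$ and $G/H$ is $Z_3$-connected then $G$ is $Z_3$-connected; and the \emph{cubic-vertex splitting lemma}: if $u$ has degree $3$ in $G$ with $N(u)=\{z_1,z_2,z_3\}$ and $(G-u)+z_1z_2$ is $Z_3$-connected, then $G$ is $Z_3$-connected.

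For the inductive step, take a realization $G$ of $\pi$; after possibly a $2$-switch I may assume that $v_1$ has a cubic neighbour $u$ with $N_G(u)=\{v_1,x,y\}$ and $xy\notin E(G)$. Put $G'=(G-u)+xy$, a simple realization of the sequence $\pi'$ obtained from $\pi$ by replacing the entry $d_1$ with $d_1-1$ and deleting one entry $3$; thus $\pi'$ is a nonincreasing graphic sequence on $n-1$ vertices with largest entry $\ge(n-1)-3$ and smallest entry $\ge3$, hence within the scope of the theorem for $n-1$. A short case analysis (splitting on $d_1=n-2$ versus $d_1=n-3$, noting that a degree-$\ge4$ entry other than $v_1$'s survives in the first case while $\pi'$ has largest entry strictly less than $n-2$ in the second, together with a parity check) shows $\pi'$ is none of the exceptional sequences $((n-1)-3,3^{(n-1)-1})$, $(k',3^{k'})$, $(k'^2,3^{k'-1})$ with $k'=(n-1)-1$; so by the induction hypothesis $\pi'$ has a $Z_3$-connected realization $H'$ (the recursion terminates by $n-1\le5$, where Theorem~\ref{th12} applies). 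Since $\pi'$ has an entry equal to $d_1-1$, the graph $H'$ has a vertex $w$ of degree $d_1-1$, and since $H'$ is $2$-edge-connected on $n-1\ge5$ vertices it has an edge $pq$ with $w\notin\{p,q\}$. Let $G^{\ast}$ be obtained from $H'$ by deleting $pq$ and inserting a new vertex $u^{\ast}$ joined to $p,q,w$; then $G^{\ast}$ realizes $\pi$, and $G^{\ast}-u^{\ast}+pq=H'$ is $Z_3$-connected, so $G^{\ast}$ is $Z_3$-connected by the splitting lemma. This yields the desired realization of $\pi$.

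It remains to show that $(n-3,3^{n-1})$ has no $Z_3$-connected realization, which I would prove separately by induction on $n$, extending the argument from~\cite{Zhang08} that handles the exceptional sequences of Theorem~\ref{th12}: in any realization $G$ of $(n-3,3^{n-1})$ the vertex $v_1$ is joined to $n-3$ of the cubic vertices, splitting off such a cubic neighbour produces a realization of $(n-4,3^{n-2})=((n-1)-3,3^{(n-1)-1})$, and the base case $n=6$ is checked by hand ($(3^6)$ is realized only by $K_{3,3}$ and the triangular prism, and neither is $Z_3$-connected) — the point being that the iterated contraction of $Z_3$-connected subgraphs never collapses such a graph to $K_1$. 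I expect this last step, together with the bookkeeping in the inductive step — guaranteeing that the $2$-switch making $xy$ a non-edge is available, and choosing a different cubic vertex, or in a degenerate configuration building a realization directly with $v_1$ (and $v_2$ when $d_1=n-3$) as hub(s) of an even wheel onto which the remaining cubic vertices are hung by two edges each, whenever the naive reduction would create a multiple edge or land on an exception — to be the main obstacle, since the non-existence assertion must hold for \emph{every} realization at once and the three families of exceptions interact delicately with the reduction.
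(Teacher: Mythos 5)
Your inductive step rests on a ``cubic-vertex splitting lemma'' that is false as stated: it is not true that if $u$ has degree $3$ in $G$ and $(G-u)+z_1z_2$ is $Z_3$-connected then $G$ is $Z_3$-connected. Take $G=K_4$ and $u$ any vertex: $(G-u)+z_1z_2$ is a triangle with a doubled edge, which contracts to $K_1$ and is therefore $Z_3$-connected, yet $K_4$ (the odd wheel $W_3$) is not $Z_3$-connected --- it is not even $3$-flowable, which is exactly the zero-sum function $b\equiv 0$ slipping through. The correct statement (Lemma~\ref{le23}, from Luo et al.) only produces a flow for zero-sum functions with $b(u)\neq 0$; the case $b(u)=0$ must be handled separately, and in general cannot be. This is fatal to your reattachment step: your reduction $\pi\mapsto\pi'$ decrements only $d_1$ and deletes one $3$, so going back up forces you to delete an edge $pq$ of $H'$ and hang $u^\ast$ on $p,q,w$, and the only tool that could certify $Z_3$-connectivity of the result is the false lemma. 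The paper avoids this by using the residual sequence $\bar{\pi}=(d_1-1,d_2-1,d_3-1,d_4,\ldots,d_{n-1})$, which decrements the top \emph{three} entries; then the inverse operation is simply ``add a new vertex with three edges to a $Z_3$-connected graph,'' which is legitimate by part (8) of Lemma~\ref{le21}, and the cases where this reduction breaks down (e.g.\ $d_3=3$, or $\bar\pi$ landing on an exception) are handled by explicit even-wheel constructions (Lemma~\ref{41} and Cases 1--2 of the proof).

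The non-existence half is also not salvageable as sketched. Splitting off a cubic neighbour of $v_1$ in a realization of $(n-3,3^{n-1})$ and invoking induction on ``not $Z_3$-connected'' does not work: non-$Z_3$-connectivity does not lift along the splitting operation (again $K_4$ versus its split is a counterexample to the contrapositive you would need), and ``the iterated contraction never collapses to $K_1$'' is not a proof of non-$Z_3$-connectivity, since contraction of $Z_3$-connected subgraphs is only a sufficient test. The paper's Lemma~\ref{34} instead takes an \emph{arbitrary} realization $G$ of $(n-3,3^{n-1})$, classifies the possible configurations of the two vertices $x_1,x_2$ not adjacent to the hub (whether $x_1x_2\in E(G)$ and the size of $N(x_1)\cap N(x_2)$, together with the parities of the paths into which $N(u)$ decomposes, forced by $3$-flowability), and for each configuration exhibits an explicit zero-sum function $b$ admitting no nowhere-zero realization. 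Some argument of this direct, exhaustive kind is unavoidable, because the claim must defeat every realization simultaneously.
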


\begin{thm}
\label{th2}
A nonincreasing graphic sequence $(d_1, \ldots, d_n)$  with
 $d_n\ge 3$ and $d_{n-5}\ge 4$ has a $Z_3$-connected realization unless
the sequence is $(5^2, 3^4)$ or $(5, 3^5)$.
\end{thm}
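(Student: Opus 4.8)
The strategy is to reduce as much as possible to Theorems~\ref{th12}, \ref{th13} and~\ref{th1}, and to dispose of a small residual family by explicit constructions. Throughout I will use the standard facts that (a) $K_1$, any two vertices joined by at least two parallel edges, $K_4$, and the $4$-wheel $W_4$ (a $4$-cycle with a dominating vertex) are $Z_3$-connected, while the $5$-wheel $W_5$ is not --- this last point being exactly why $(5,3^5)$ is exceptional; (b) if $H\subseteq G$ with $H$ and $G/H$ both $Z_3$-connected, then $G$ is $Z_3$-connected; and (c) if $H$ is $Z_3$-connected and $G$ is obtained by adding to $H$ a new vertex joined to $V(H)$ by at least two edges, then $G$ is $Z_3$-connected. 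Iterating (c) from $K_1$ and collapsing digons via (b) yields a large stock of $Z_3$-connected graphs.

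First, if $d_{n-3}\ge 4$ we are done by Theorem~\ref{th13}, so assume $d_{n-3}=3$; together with $d_{n-5}\ge 4$ this means that exactly $m\in\{4,5\}$ of the $d_i$ equal $3$, while the other $n-m$ \emph{core} vertices have degrees in $[4,d_1]$. If $d_1\ge n-3$ we are done by Theorem~\ref{th1}, and a routine check shows that the only exceptions of Theorem~\ref{th1} satisfying $d_n\ge 3$ and $d_{n-5}\ge 4$ are exactly $(5^2,3^4)$ and $(5,3^5)$. Hence we may assume $d_1\le n-4$, and since some core vertex has degree in $[4,d_1]\subseteq[4,n-4]$ this forces $n\ge 8$.

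It now suffices to prove, by induction on $n$, that every graphic $\pi$ with $n\ge 8$, $d_1\le n-4$, exactly $m\in\{4,5\}$ entries equal to $3$, and all remaining entries in $[4,n-4]$, has a $Z_3$-connected realization. If $d_1=4$, then all core vertices have degree $4$; a parity count excludes $m=5$, so $\pi=(4^{n-4},3^4)$, and I take $G$ to be a $4$-cycle on the degree-$3$ vertices together with a matching joining it to four core vertices and a graph $H$ on the $n-4$ core vertices with degree sequence $(4^{n-8},3^4)$; by the inductive hypothesis (or Theorem~\ref{th1} when $n\le 11$, none of these sequences being exceptional) $H$ has a $Z_3$-connected realization, and then $G/H=W_4$, so $G$ is $Z_3$-connected by (a) and (b). If instead $d_1\ge 5$, some core vertex has degree $\ge 5$; then one chooses how the $m$ degree-$3$ vertices are joined to each other and to the core so that $G$ is $2$-edge-connected, routes the few edges leaving the degree-$3$ set to core vertices so that --- using the slack from $d_1\ge 5$ --- at most three core vertices drop to degree $3$ and none drop lower, realizes the resulting core degree sequence $Z_3$-connectedly by Theorem~\ref{th13}, and verifies $Z_3$-connectivity of $G$ by collapsing the $Z_3$-connected core and repeatedly applying (b) and (c) to the small multigraph that remains (which reduces, via $K_4$, $W_4$, digons and multiple edges, to a $Z_3$-connected graph).

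The main obstacle is the last case, especially the sparse instances where $d_1\in\{4,5\}$ and $n$ is small: there the ``peel off a $Z_3$-connected core'' scheme has little room, and the natural reductions threaten to produce a non-graphic core sequence, a separating $2$-edge-cut, or a $W_5$-type quotient --- exactly the places where $(5^2,3^4)$ and $(5,3^5)$ would reappear. Dealing with them requires verifying a bounded list of ad hoc realizations (such as the wheel blow-up above) and confirming in each that those two sequences are the sole obstructions; once this finite checking is carried out, the induction closes.
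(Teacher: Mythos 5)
Your opening reductions (to $d_{n-3}=3$, then via Theorem~\ref{th1} to $d_1\le n-4$ and $n\ge 8$, with $(5^2,3^4)$ and $(5,3^5)$ identified as the surviving exceptions) match the paper's first steps, but the core of your argument has genuine gaps. First, your list (a) of basic $Z_3$-connected graphs is wrong: $K_4$ is the odd wheel $W_3$ and is \emph{not} $Z_3$-connected (Lemma~\ref{le21}(5)). This is not a harmless slip, because it surfaces immediately in your $d_1=4$ construction: for $n=8$ your scheme requires a $Z_3$-connected realization of the core sequence $(3^4)$ on four vertices, whose only simple realization is $K_4$; moreover $(3^4)=(k,3^k)$ with $k=3$ is precisely one of the exceptions of Theorem~\ref{th1} (and of Lemma~\ref{le230}), contradicting your parenthetical claim that none of the core sequences with $n\le 11$ is exceptional. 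Since your induction for $(4^{n-4},3^4)$ steps down by $4$, the entire residue class $n\equiv 0\pmod 4$ is left unproved until $n=8$ is settled by a genuinely different realization; the paper supplies one explicitly (the graph (c) of Fig.~2, Lemma~\ref{fig3}, feeding into Lemma~\ref{31}(ii)).

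Second, your case $d_1\ge 5$ is a plan rather than a proof: you never establish that the $m\in\{4,5\}$ degree-$3$ vertices can always be attached so that the residual core sequence is graphic, has at most three entries equal to $3$ (so that Theorem~\ref{th13} applies), and yields a $Z_3$-connected quotient multigraph --- and these are exactly the points where such an argument can collapse, as you yourself concede in your final paragraph. The paper avoids this difficulty by a different mechanism: it repeatedly deletes a degree-$3$ vertex and passes to the residual sequence $\bar\pi$ (Theorem~\ref{th20} together with Lemma~\ref{le21}(8)) to force $d_2=d_3=4$, hence $\pi=(d_1,4^{n-6},3^5)$ with $d_1$ odd; the case $d_1=5$ is then an explicit construction (Lemma~\ref{31}(iii)), and $d_1\ge 7$ is obtained from it by lifting $(d_1-5)/2$ independent edges onto the $5$-vertex via Lemma~\ref{le22}. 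To salvage your outline you would need to replace the vague routing step by this residual-sequence reduction (or prove the routing claim in full), and repair the $d_1=4$ base cases with the explicit graphs of Lemmas~\ref{fig2} and~\ref{fig3}.
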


We end this section with some notation and terminology. A graph is trivial if $E(G)=\emptyset$ and nontrivial otherwise. A {\em $k$-vertex} denotes a vertex of
degree $k$. Let $P_n$ denote the path on $n$
vertices and we call $P_n$ a $n$-path.
An {\em $n$-cycle} is a cycle on $n$ vertices.
The {\em wheel $W_k$} is the graph obtained from a $k$-cycle by
adding a new vertex, the center of the wheel,  and joining it to every vertex of the $k$-cycle.
 A wheel $W_k$ is an {\em odd (even)} wheel if  $k$ is odd (even). For
simplicity, we say $W_1$ is a triangle. For a graph $G$
and $X\subseteq V(G)$,
denote by $G[X]$ the subgraph of $G$ induced by $X$. For two vertex-disjoint subsets $V_1, V_2$ of $V(G)$,
denote by $e(V_1, V_2)$ the number of edges with one endpoint in
$V_1$ and the other endpoint in $V_2$.

We organize this paper as follows. In Section 2, we state some results and establish some lemmas that will be used in the following proofs. We will deal with some special degree sequences, each of which has a $Z_3$-connected realization in Section 3. In Sections 4 and 5, we will give the proofs of Theorems~\ref{th1} and~\ref{th2}.

\section{Lemmas}

Let $\pi=(d_1,  \ldots, d_n)$ be a graphic sequence with $d_1\ge  \ldots \ge d_n$.
Throughout this paper, we use $\bar{\pi}$ to represent the sequence $(d_1-1, \ldots, d_{d_n}-1,
d_{d_n+1}, \ldots, d_{n-1})$, which is called the
\emph{residual sequence} obtained from $\pi$ by deleting
$d_n$. The following well-known result is due to Hakimi \cite{SH, SLH} and Kleitman and Wang \cite{DK}.

\begin{thm}
\label{th20} A graphic sequence has even sum. Furthermore, a sequence $\pi$ is graphic if and only if $\bar{\pi}$ is graphic.
\end{thm}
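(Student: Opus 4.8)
The plan is to dispatch the two assertions in turn, both being classical; the even-sum statement is immediate and the equivalence is the standard extremal-realization-plus-$2$-switch argument (Hakimi; Kleitman--Wang). For the first assertion, if $G$ realizes $\pi$ then $\sum_{i=1}^n d_i=\sum_{v\in V(G)}\deg_G(v)=2|E(G)|$ is even. For the equivalence, the direction ``$\bar\pi$ graphic $\Rightarrow$ $\pi$ graphic'' is easy: take a realization $H$ of $\bar\pi$ on $\{v_1,\dots,v_{n-1}\}$ with $\deg_H(v_i)=d_i-1$ for $1\le i\le d_n$ and $\deg_H(v_i)=d_i$ for $d_n<i\le n-1$, add a new vertex $v_n$ joined exactly to $v_1,\dots,v_{d_n}$, and note that the resulting graph realizes $\pi$ (if $d_n=0$ there is nothing to add).

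The substantive direction is ``$\pi$ graphic $\Rightarrow$ $\bar\pi$ graphic''. I would fix a realization $G$ of $\pi$ on $\{v_1,\dots,v_n\}$ with $\deg_G(v_i)=d_i$, chosen among all such realizations so as to maximize the number of indices $i\in\{1,\dots,d_n\}$ with $v_iv_n\in E(G)$. The claim is that then $N_G(v_n)=\{v_1,\dots,v_{d_n}\}$, whence $G-v_n$ is a realization of $\bar\pi$ and we are done. To prove the claim, suppose not; since $\deg_G(v_n)=d_n$ there are indices $i\le d_n<j$ with $v_i\notin N_G(v_n)$ and $v_j\in N_G(v_n)$, so $d_i\ge d_j$. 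Comparing $N_G(v_i)\setminus\{v_j\}$ with $N_G(v_j)\setminus\{v_i,v_n\}$---their sizes differ by $d_i-d_j+1\ge 1$, and $v_n$ belongs to neither since $v_iv_n\notin E(G)$---produces a vertex $w\notin\{v_i,v_j,v_n\}$ with $v_iw\in E(G)$ and $v_jw\notin E(G)$. Deleting the edges $v_iw,v_jv_n$ and inserting $v_iv_n,v_jw$ then yields another realization of $\pi$ in which the neighborhood of $v_n$ is obtained from the old one by exchanging $v_j$ (with $j>d_n$) for $v_i$ (with $i\le d_n$); this contradicts the choice of $G$.

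The only real obstacle lies in this last direction: one must check that the $2$-switch is always admissible, i.e. that $v_iw,v_jv_n\in E(G)$ while $v_iv_n,v_jw\notin E(G)$, and that the exchange vertex $w$ can be taken distinct from $v_i,v_j,v_n$---which is exactly what the uniform size inequality above guarantees, handling the cases $v_iv_j\in E(G)$ and $v_iv_j\notin E(G)$ simultaneously. The handful of degenerate inputs ($d_n=0$, or $d_n=n-1$ forcing $\pi=((n-1)^n)$ and $\bar\pi=((n-2)^{n-1})$) are immediate and can be set aside at the start.
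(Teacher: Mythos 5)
Your proof is correct: the handshake argument gives the even sum, and the extremal-realization $2$-switch argument (maximizing $|N_G(v_n)\cap\{v_1,\dots,v_{d_n}\}|$ and using the size comparison of $N_G(v_i)\setminus\{v_j\}$ with $N_G(v_j)\setminus\{v_i,v_n\}$ to find an admissible exchange vertex $w$) correctly establishes the equivalence in both directions, with the degenerate cases $d_n=0$ and $d_n=n-1$ properly set aside. The paper itself gives no proof of this statement---it is quoted as a known theorem with citations to Hakimi and to Kleitman and Wang---and your argument is precisely the classical one from those sources, so there is nothing further to compare.
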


Some results in  \cite{Chen08, DeVos06, Fan08,
Jaeger92, Lai00} on group connectivity are
summarized as follows.

\begin{lemma}
\label{le21} Let $A$ be an abelian group with $|A|\ge 3$. The following
results are known:

(1) $K_1$ is $A$-connected;

(2) $K_n$ and $K_n^-$ are $A$-connected if $n\ge 5$;

(3) An $n$-cycle is $A$-connected if and only if $|A|\ge n+1$;

(4) $K_{m,n}$ is $A$-connected if $m\ge n\ge 4$; neither $K_{2,t}$
$(t\ge 2)$ nor $K_{3,s}$ $(s\ge 3)$ is $Z_3$-connected;

(5) Each even wheel is $Z_3$-connected and each odd wheel is not;

(6) Let $H\subseteq G$ and $H$ be $A$-connected. $G$ is
$A$-connected if and only if $G/H$ is $A$-connected;

(7) If $G$ is not $A$-connected, then any spanning subgraph of $G$
is not $A$-connected.

(8) Let $v$ be not a vertex of
$G$. If $G$ is $A$-connected and $e(v, G)\ge 2$, then $G\cup \{v\}$
is $A$-connected.
\end{lemma}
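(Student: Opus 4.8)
The eight assertions are drawn from several sources, so my plan is to organize them so that the three structural closure properties (6)--(8) are proved first and then used as engines to derive the concrete families in (1)--(5). I would begin with (1), which is vacuous: $K_1$ has no edges, the only zero-sum $b$ is $b(v)=0$, and the empty map $f$ gives $\partial f(v)=0=b(v)$. For (6) I would argue both implications by the standard boundary-splitting device: contract $H$ to a single vertex $v_H$, transfer a zero-sum demand $b$ on $G$ to the demand $b'$ on $G/H$ obtained by accumulating $\sum_{u\in V(H)}b(u)$ at $v_H$ (again zero-sum), realize $b'$ on $G/H$ since $G/H$ is $A$-connected, and then use the $A$-connectivity of $H$ to label the edges inside $H$ against the residual (still zero-sum) demand there; the converse is symmetric. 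For (7) I would take an $A$-connected spanning subgraph $H$, orient and label the edges of $E(G)\setminus E(H)$ with arbitrary nonzero values, subtract their boundary from $b$ to obtain a zero-sum demand $b'$ on $H$, and realize $b'$ in $H$. For (8) I would use $|A^*|\ge 2$: with $k=e(v,G)\ge 2$ edges at the new vertex $v$, a short case check (given target $t$, pick $x_1\in A^*\setminus\{t\}$ and set $x_2=\pm(t-x_1)\in A^*$) shows the signed sum at $v$ can be made equal to any prescribed $b(v)\in A$ using nonzero edge values, after which the induced demand on $G$ is zero-sum and is realized by the $A$-connectivity of $G$.

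For (3) I would set up the one-parameter linear system on $C_n=v_1v_2\cdots v_n$: orienting the cycle coherently and writing the edge values as $x_i$, the boundary equations force $x_i=x_1+c_i$ for constants $c_i$ determined by $b$ (with $c_1=0$), the cyclic consistency condition being exactly $\sum_i b(v_i)=0$. Each requirement $x_i\ne 0$ forbids one value of $x_1$, so at most $n$ values of $x_1$ are excluded; hence a nowhere-zero labeling exists whenever $|A|\ge n+1$. For the necessity direction I would, given $|A|\le n$, construct a zero-sum $b$ whose constants $c_i$ realize every element of $A$ as some $-c_i$, leaving no admissible $x_1$ and certifying that $C_n$ is not $A$-connected.

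With (6)--(8) and (3) in hand, I would reduce (2) and the positive half of (4) to small base cases combined with vertex-addition. For $A=Z_3$ the base graphs $K_5$ and $K_5^-$ each contain the even wheel $W_4$ as a \emph{spanning} subgraph (for $K_5^-$, choose the center to be a full-degree vertex and route the rim $4$-cycle so as to avoid the missing edge), so their $Z_3$-connectivity follows from the even-wheel case of (5) via (7); the general-$A$ base cases $K_5$, $K_5^-$, $K_{4,4}$ I would take from the cited references. Then $K_n$ $(n\ge 6)$ is obtained from $K_{n-1}$, $K_n^-$ $(n\ge 6)$ from $K_{n-1}$ (delete an endpoint of the missing edge), and $K_{m,n}$ $(m\ge n\ge 4)$ from $K_{4,4}$ by adjoining vertices one at a time, in every case joined to at least two (indeed at least four) existing vertices, so that (8) applies. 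The positive part of (5), that every even wheel is $Z_3$-connected, I would prove by induction, contracting a suitable $Z_3$-connected fan to pass from $W_{2k}$ to $W_{2k-2}$ through (6), with the smallest even wheel checked by hand.

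The genuine content, and the step I expect to be hardest, lies in the negative assertions: that $C_n$ fails for $|A|\le n$ (addressed above), that $K_{2,t}$ $(t\ge 2)$ and $K_{3,s}$ $(s\ge 3)$ are not $Z_3$-connected, and that every odd wheel is not $Z_3$-connected. For each I would exhibit an explicit zero-sum demand that cannot be met, exploiting the degree-$2$ vertices of $K_{2,t}$, the degree-$3$ vertices of $K_{3,s}$, and the odd rim cycle of an odd wheel, together with a counting and parity argument over $Z_3$. Because such an obstruction must rule out \emph{all} orientations and \emph{all} nowhere-zero labelings simultaneously, constructing the demands and verifying their infeasibility is the delicate part; for these full verifications I would defer to~\cite{Chen08, DeVos06, Fan08, Jaeger92, Lai00}.
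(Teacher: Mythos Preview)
The paper does not prove Lemma~\ref{le21} at all: it is presented explicitly as a summary of known results, with the sentence ``Some results in~\cite{Chen08, DeVos06, Fan08, Jaeger92, Lai00} on group connectivity are summarized as follows'' standing in for any argument. So there is nothing to compare your proposal against except the bare citations.

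That said, your sketches are sound and well organized. The closure properties (6)--(8) and the cycle analysis (3) are handled by exactly the standard arguments, and your reductions of (2) and the positive half of (4) to base cases via (8) are correct; in particular your observation that $K_5^-$ contains $W_4$ as a spanning subgraph (center at a full-degree vertex, rim cycle routed around the missing edge) cleanly settles the $Z_3$ base case through (7) and (5). The one place your outline is thin is the inductive step for even wheels in (5): the phrase ``contracting a suitable $Z_3$-connected fan to pass from $W_{2k}$ to $W_{2k-2}$'' does not name a concrete $Z_3$-connected subgraph whose contraction yields $W_{2k-2}$, and the obvious candidates (a rim edge plus the center, i.e.\ a triangle) are not $Z_3$-connected. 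You would need either a different contraction or a direct argument for $W_{2k}$; since you already defer the negative assertions to the cited references, deferring the even-wheel case there as well would match what the paper actually does.
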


Let $G$ be a graph having an induced path with three vertices $v,u, w$ in order. Let $G_{[uv, uw]}$ be the graph by deleting $uv$ and $uw$ and adding a new edge $vw$.  The following lemma was first proved by Lai in~\cite{Lai00} and reformulated by Chen {\it et al.} in \cite{Chen08}.

\begin{lemma}
\label{le22} Let $G$ be a graph with $u\in V(G)$, $uv, uw\in E(G)$ and $d(u)\geq 4$, and let $A$ be an abelian group with $|A|\geq 3$. If
$G_{[uv,uw]}$ is $A$-connected, then so is $G$.
\end{lemma}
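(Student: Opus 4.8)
The plan is to prove the lemma straight from the definition of $A$-connectivity, by pushing a prescribed boundary through $G_{[uv,uw]}$ and then repairing the two edges that were deleted. Throughout, write $G'=G_{[uv,uw]}$ and let $e_0$ be the new edge of $G'$ joining $v$ and $w$; thus $V(G')=V(G)$ and $E(G')=(E(G)\setminus\{uv,uw\})\cup\{e_0\}$.

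First I would fix an arbitrary zero-sum function $b\colon V(G)\to A$. Since $V(G')=V(G)$, the same $b$ is a zero-sum function on $G'$, so the hypothesis that $G'$ is $A$-connected produces an orientation $D'$ of $G'$ and a map $f'\colon E(G')\to A^*$ with $\partial f'(x)=b(x)$ for all $x$. Set $\alpha=f'(e_0)\in A^*$; after reversing the orientation of $e_0$ and replacing $\alpha$ by $-\alpha$ if necessary — which changes neither $\partial f'$ nor the fact that the value lies in $A^*$ — I may assume $e_0$ is oriented from $v$ to $w$ in $D'$.

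Next I would define an orientation $D$ of $G$ together with a map $f\colon E(G)\to A^*$: every edge of $E(G)\setminus\{uv,uw\}=E(G')\setminus\{e_0\}$ retains its $D'$-orientation and its $f'$-value; the edge $uv$ is oriented from $u$ to $v$ with $f(uv)=-\alpha$; and $uw$ is oriented from $u$ to $w$ with $f(uw)=\alpha$. Because $\alpha\neq 0$, $f$ really maps into $A^*$. Checking $\partial f=b$ is then a short sign-chase: at a vertex different from $u,v,w$ nothing has changed; at $u$ the two new out-edges contribute $f(uv)+f(uw)=-\alpha+\alpha=0$, and $e_0$ was never incident to $u$, so $\partial f(u)=\partial f'(u)=b(u)$; at $v$ the tail-edge $e_0$ had contributed $+\alpha$ to $\partial f'(v)$ while the head-edge $uv$ now contributes $-f(uv)=+\alpha$ to $\partial f(v)$; and symmetrically at $w$ the contribution is $-\alpha$ whether carried by the head-edge $e_0$ in $G'$ or by the head-edge $uw$ in $G$. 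Hence $\partial f\equiv b$ and $G$ is $A$-connected.

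I do not expect a genuine obstacle: the whole content is the boundary bookkeeping at $u$, $v$, $w$ and the harmless normalization of the orientation of $e_0$. It is worth pointing out where the hypothesis $d(u)\ge 4$ enters, namely only to prevent the statement from being vacuous: it forces $d_{G'}(u)\ge 2$, and a graph with a vertex of degree at most $1$ is never $A$-connected (apart from $K_1$), so otherwise no $G_{[uv,uw]}$ with nontrivial $G$ could satisfy the assumption. The same argument works verbatim if one drops the tacit condition $vw\notin E(G)$, simply treating $e_0$ as an additional parallel copy of $vw$.
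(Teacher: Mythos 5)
Your proof is correct. Note that the paper does not prove this lemma at all: it is quoted from Lai \cite{Lai00} (as reformulated by Chen et al.\ \cite{Chen08}), and your argument --- take the $(D',f')$ realizing $b$ on $G_{[uv,uw]}$, normalize the split edge $vw$ to carry $\alpha$ from $v$ to $w$, and lift it back to the path $v\,u\,w$ with values $-\alpha$ on $uv$ and $\alpha$ on $uw$ oriented out of $u$ --- is exactly the standard proof from those sources; the boundary bookkeeping at $u$, $v$, $w$ is right, and your remark that $d(u)\geq 4$ serves only to keep the hypothesis from being vacuous (a vertex of degree at most $1$ in $G_{[uv,uw]}$ would preclude $A$-connectivity) is also accurate.
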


A graph $G$ is {\em triangularly connected} if for every edge $e, f\in E$ there
exists a sequence of cycles $C_1, C_2, \ldots, C_k$ such that $e\in E(C_1)$,
$f\in E(C_k)$, and $|E(C_i)|\le 3$ for $1\le i\le k$, and $|E(C_j)\cap E(C_{j+1})|\neq \emptyset$
for $1\le j\le k-1$.

\begin{lemma}
\label{triangle}{\rm (\cite{DeVos06})}  A triangularly connected graph
 $G$ is $Z_3$-connected if $G$ has minimum degree at least 4 or
has  a nontrivial $Z_3$-connected subgraph.
\end{lemma}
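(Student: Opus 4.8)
The plan is to prove the two sufficient conditions separately, taking the case ``$G$ has a nontrivial $Z_3$-connected subgraph'' as the core and reducing the minimum-degree case to it. Throughout I would use the contraction and vertex-addition tools of Lemma~\ref{le21}(6)--(8), the edge-lifting Lemma~\ref{le22}, and the definition of triangular connectivity, which guarantees that any two edges are joined by a chain of triangles overlapping in edges.

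First I would settle the case that $G$ contains a nontrivial $Z_3$-connected subgraph. Choose a $Z_3$-connected subgraph $H$ of $G$ that is maximal, first in $|V(H)|$ and then in $|E(H)|$; by hypothesis $H$ has an edge. I claim $H=G$. If not, pick $f\in E(G)\setminus E(H)$ and $e\in E(H)$, and let $C_1,\dots,C_k$ be a triangle chain with $e\in E(C_1)$, $f\in E(C_k)$ and $E(C_i)\cap E(C_{i+1})\neq\emptyset$. Walking along the chain I would locate the first triangle $C=xyz$ that is not contained in $H$ yet shares an edge, say $xy$, with $H$, so that $x,y\in V(H)$. If $z\notin V(H)$, then $xz,yz$ give $e(z,H)\ge 2$, whence $H\cup\{z\}$ is $Z_3$-connected by Lemma~\ref{le21}(8), contradicting maximality. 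If $z\in V(H)$, then some edge of $C$, say $xz$, lies outside $E(H)$, and adding it yields a spanning supergraph of $H$ on $V(H)$, which is $Z_3$-connected by the contrapositive of Lemma~\ref{le21}(7), again a contradiction. Hence $H=G$.

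To handle the minimum-degree case I would reduce it to the one just proved by producing a recognizable nontrivial $Z_3$-connected subgraph. A clean dichotomy drives this: if every vertex has a complete neighborhood, then any vertex together with four mutually adjacent neighbors spans a $K_5$, which is $Z_3$-connected by Lemma~\ref{le21}(2), so the first case applies. Otherwise some vertex $u$ with $d(u)\ge 4$ has two nonadjacent neighbors $v,w$, giving an induced path $v,u,w$; I would then pass to $G_{[uv,uw]}$ and argue by induction on $|E(G)|$, so that once $G_{[uv,uw]}$ is shown to be $Z_3$-connected, Lemma~\ref{le22} carries $Z_3$-connectivity back to $G$. (An even cycle in some neighborhood $G[N(v)]$ would shortcut this by exhibiting an even wheel $W_{|C|}$, $Z_3$-connected by Lemma~\ref{le21}(5), as would a copy of $K_5^-$ or $K_{m,n}$ with $m\ge n\ge 4$ via parts (2) and (4).)

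I expect the inductive step — controlling the reduced graph $G_{[uv,uw]}$ — to be the main obstacle, and it is genuinely the crux rather than a routine reduction. Lifting $uv,uw$ lowers $d(u)$ by $2$ and may break the triangle carrying those edges, so $G_{[uv,uw]}$ need not satisfy either hypothesis: when $d(u)\in\{4,5\}$ the minimum-degree bound fails, and it is exactly here that one must show the new edge $vw$ either restores triangular connectivity together with a nontrivial $Z_3$-connected piece or forces a small sporadic configuration that can be checked directly against the list in Lemma~\ref{le21}. The selection of $u$ and of the pair $uv,uw$ must therefore be made carefully, preferring a vertex of degree at least $6$ and edges whose deletion leaves both $v$ and $w$ on a triangle. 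Once a nontrivial $Z_3$-connected subgraph is secured in every case, the maximal-subgraph argument of the second paragraph completes the proof.
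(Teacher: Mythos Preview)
The paper does not prove this lemma; it is quoted from DeVos, Xu, and Yu \cite{DeVos06} without argument, so there is no ``paper's own proof'' to compare against. I therefore evaluate your proposal on its own merits.

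Your treatment of the case where $G$ already contains a nontrivial $Z_3$-connected subgraph is complete and correct. Choosing a maximal such $H$, running a triangle chain from an edge of $H$ to an edge outside $H$, and then absorbing either a new vertex via Lemma~\ref{le21}(8) or a new edge via the contrapositive of Lemma~\ref{le21}(7) is exactly the standard way this implication is established.

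The minimum-degree case, however, has a genuine gap, and you identify it yourself. Your plan is to lift a pair of edges at a vertex $u$ with nonadjacent neighbours $v,w$ and induct on $|E(G)|$, but $G_{[uv,uw]}$ need neither be triangularly connected nor have minimum degree at least~$4$, so the induction hypothesis does not apply. Remarking that ``the selection of $u$ and of the pair $uv,uw$ must be made carefully'' is not a proof: you have not shown that a good choice always exists, and for many triangularly connected graphs with $\delta=4$ every lift at every vertex destroys both hypotheses simultaneously. The alternative shortcuts you list (locating a $K_5$, a $K_5^-$, an even wheel, or a $K_{m,n}$ with $m\ge n\ge 4$) would indeed finish the proof via your first part, but you have not established that one of these configurations is always present in a triangularly connected graph with $\delta\ge 4$; the ``complete neighbourhood'' dichotomy only handles one extreme. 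As it stands, the minimum-degree half is an outline whose central step is missing.
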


\medskip

An orientation $D$ of $G$ is  a {\it modular 3-orientation} if
$|E^+(v)|-|E^-(v)|\equiv 0$ (mod 3) for every vertex $v\in V(G)$.  Steinberg and Younger \cite{SY89} established the following relationship.

\begin{lemma}
\label{le24}
A graph
$G$ is 3-flowable if and only if $G$ admits a
modular 3-orientation.
\end{lemma}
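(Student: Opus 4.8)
The plan is to deduce the lemma from Tutte's classical equivalence between nowhere-zero integer $k$-flows and nowhere-zero $A$-flows for an abelian group $A$ with $|A|=k$; taking $A=Z_3$, it suffices to show that $G$ admits a nowhere-zero $Z_3$-flow if and only if $G$ admits a modular $3$-orientation. The one idea that powers both implications is that $Z_3-\{0\}=\{1,2\}$ and $2=-1$ in $Z_3$, so an edge carrying the value $2$ can be replaced by the same edge with reversed direction and value $1$ without changing $\partial f$ at either endpoint; hence every $Z_3$-flow can be normalized to one in which every edge carries the value $1$, and such an ``all-ones'' assignment satisfies $\partial f(v)=|E^+(v)|-|E^-(v)| \pmod 3$.

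For the forward direction I would start with a $3$-flowable graph $G$, invoke Tutte to get an orientation $D_0$ and a map $f:E(G)\to Z_3-\{0\}$ with $\partial f(v)=0$ in $Z_3$ for every $v$, and then reverse the direction of each edge $e$ with $f(e)=2$ while resetting $f(e):=1$. Since reversing an arc and negating its value preserves each endpoint's contribution to the boundary, the new orientation $D$ still has $\partial f(v)=0$; as $f$ is now identically $1$, this says exactly $|E^+(v)|-|E^-(v)|\equiv 0 \pmod 3$ for all $v$, i.e.\ $D$ is a modular $3$-orientation. For the converse I would take a modular $3$-orientation $D$, set $f(e)=1$ for every edge, observe that $f$ is then a nowhere-zero $Z_3$-flow because $\partial f(v)=|E^+(v)|-|E^-(v)|\equiv 0 \pmod 3$, and apply Tutte's equivalence again to conclude that $G$ is $3$-flowable.

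I expect the only nontrivial ingredient to be Tutte's theorem that a nowhere-zero integer $3$-flow exists if and only if a nowhere-zero $Z_3$-flow exists; the rest is bookkeeping with orientations and signs. If one insisted on a self-contained argument, the hard part would be lifting a $\{1,2\}$-valued flow that is conservative modulo $3$ to an integer-valued flow that is conservative over the integers with all values of absolute value at most $2$ — the standard way is to choose the signs of the lifted values using the cycle space of $G$, but this is precisely the step that carries the real weight of the statement.
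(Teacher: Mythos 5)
The paper does not prove this lemma at all: it is quoted as a known result of Steinberg and Younger, so there is no internal proof to compare against. Your argument is the standard correct derivation — the reversal trick $2=-1$ in $Z_3$ shows that nowhere-zero $Z_3$-flows and modular $3$-orientations are the same thing, and Tutte's equivalence between nowhere-zero integer $3$-flows and nowhere-zero $Z_3$-flows supplies the rest (indeed, for the ``only if'' direction you do not even need Tutte: reducing an integer $3$-flow modulo $3$ already gives a $Z_3$-flow, and only the converse direction uses the hard half of Tutte's theorem). This is a perfectly acceptable proof of the lemma, with the honest caveat, which you state, that the real content is hidden in the black box of Tutte's theorem.
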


Let  $v$ be a 3-vertex in  a graph $G$,  and let $N(v)=\{v_1, v_2, v_3\}$. Denote by $G_{(v,v_1)}$  the  graph obtained from $G$ by deleting vertex $v$  and adding a new edge $v_2v_3$.  The following lemma is due to Luo {\it et al.} \cite{Luo08}.

\begin{lemma}
\label{le23} Let $A$ be an abelian group with $|A|\geq 3$, and let $b: V(G)\mapsto A$
be a zero-sum function with $b(v)\neq 0$. If $G_{(v,v_1)}$ is $Z_3$-connected, then there exists an
orientation $D$ of $G$ and a nowhere-zero mapping $f': E(G)\mapsto A$ such that $\partial f'=b$
under the orientation $D$ of $G$.
\end{lemma}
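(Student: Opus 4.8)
The plan is to solve the boundary problem for $b$ on $G$ by reducing it to a boundary problem on the smaller graph $G' := G_{(v,v_1)}$, which we are given is $Z_3$-connected, and then ``undoing'' the reduction by re-splitting the newly added edge through the deleted vertex $v$.

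First I would fix notation: let $e^{*}=v_2v_3$ be the edge of $G'$ created when $v$ is deleted, and define a modified demand $b'$ on $V(G')=V(G)\setminus\{v\}$ by moving all of $v$'s demand onto $v_1$, i.e. $b'(v_1)=b(v_1)+b(v)$ and $b'(u)=b(u)$ otherwise. A one-line computation gives $\sum_{u\in V(G')}b'(u)=\sum_{u\neq v}b(u)+b(v)=0$, so $b'$ is again a zero-sum function. Since $G'$ is $Z_3$-connected (take $A=Z_3$; for a general $A$ with $|A|\ge 3$ the identical argument goes through once $G'$ is known to be $A$-connected), there is an orientation of $G'$ and a nowhere-zero $f:E(G')\to A^{*}$ with $\partial f=b'$. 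After possibly swapping the labels $v_2,v_3$ (their roles in the construction of $G_{(v,v_1)}$ are symmetric), assume $e^{*}$ is oriented $v_2\to v_3$ in this orientation, and set $z:=f(e^{*})\neq 0$.

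Next I would extend $f$ to an orientation $D$ and a map $f'$ on all of $G$: keep $f$ on every edge of $G'$ except $e^{*}$ (these are exactly the edges of $G$ other than $vv_1,vv_2,vv_3$); orient $vv_2$ as $v_2\to v$ and $vv_3$ as $v\to v_3$, each with value $z$, so that a flow of value $z$ simply ``passes through'' $v$, contributing nothing to $\partial f'(v)$ while reproducing at $v_2$ and $v_3$ exactly the contributions $e^{*}$ made in $G'$; and orient $vv_1$ as $v\to v_1$ with value $b(v)$, which absorbs the whole demand at $v$. The verification $\partial f'=b$ is then a routine vertex-by-vertex check: it is automatic away from $\{v,v_1,v_2,v_3\}$; it holds at $v$ because the two pass-through edges contribute $+z$ and $-z$; it holds at $v_1$ because the extra incoming edge of value $b(v)$ cancels the $b(v)$ added to $b'(v_1)$; and it holds at $v_2$ and $v_3$ because $vv_2$ and $vv_3$ mimic $e^{*}$ there.

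The step I expect to be the real content — everything else being bookkeeping — is designing this re-splitting so that simultaneously (i) the demand at $v$ is met, (ii) the new edge $e^{*}$ lifts back consistently, and (iii) no edge receives value $0$. Requirement (iii) is exactly where the hypothesis $b(v)\neq 0$ is indispensable: the edge $vv_1$ is forced to carry $b(v)$, so the construction collapses when $b(v)=0$, and this is not merely an artifact of the proof, since $G_{(v,v_1)}$ being $Z_3$-connected genuinely does not force $G$ itself to be $Z_3$-connected (for instance when $G=K_4$), so some restriction on $b(v)$ cannot be avoided. The only other point worth flagging is that $z=f(e^{*})$ is automatically nonzero, because a map realizing a prescribed boundary on a $Z_3$-connected graph is nowhere zero by definition; hence no separate argument is needed to keep $f'$ nowhere zero.
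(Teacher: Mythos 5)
The paper does not actually prove this lemma --- it is quoted without proof from Luo \emph{et al.} (the reference \cite{Luo08}) --- so there is no in-paper argument to compare against; your proof is correct and is essentially the standard one for that cited result. Pushing $b(v)$ onto $v_1$, solving on $G_{(v,v_1)}$, and lifting by routing the value $z$ of the new edge $v_2v_3$ through $v$ while letting $vv_1$ carry $b(v)\neq 0$ verifies correctly at every vertex, and your observations that $b(v)\neq 0$ is exactly what keeps $f'$ nowhere zero and that the hypothesis cannot be dropped (e.g.\ $G=K_4$) are both accurate.
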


For any odd integer $k$, Luo {\em et al.} \cite{Zhang08} proved that no realization of the graphic sequence $(k, 3^k)$ and $(k^2, 3^{k-1})$ is  3-flowable. This yields the following lemma.

\begin{lemma}
\label{le230}
 If $k$ is odd, then neither $(k, 3^k)$ nor $(k^2, 3^{k-1})$ has a $Z_3$-connected realization.
\end{lemma}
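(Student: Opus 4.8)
The plan is to read the lemma off from what has just been recalled. Since $Z_3$-connectedness is by definition the ability to realise \emph{every} zero-sum boundary by a nowhere-zero $Z_3$-function, taking the zero boundary shows that every $Z_3$-connected graph admits a nowhere-zero $3$-flow; this was already noted in the introduction. Luo \emph{et al.}~\cite{Zhang08} proved that for odd $k$ no realization of $(k,3^k)$ or of $(k^2,3^{k-1})$ is $3$-flowable. Hence, if some realization $G$ of one of these two sequences were $Z_3$-connected, then $G$ would be $3$-flowable, contradicting that result; so neither sequence has a $Z_3$-connected realization. In this sense there is no real obstacle in proving the lemma itself — all the weight sits on the quoted non-flowability statement.

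For completeness I would also recall why that non-flowability statement holds, arguing through modular $3$-orientations (Lemma~\ref{le24}). Let $k$ be odd and $G$ a simple realization. If $G$ realizes $(k,3^k)$, the degree-$k$ vertex $v$ is adjacent to all $k$ other vertices, so every $3$-vertex has its two remaining edges going to other $3$-vertices; thus $G-v$ is $2$-regular. If $G$ realizes $(k^2,3^{k-1})$, both degree-$k$ vertices are adjacent to all other vertices, hence to each other, and each $3$-vertex has exactly one remaining edge, going to another $3$-vertex; thus the $3$-vertices induce a perfect matching. Now suppose $G$ has a modular $3$-orientation. For a $3$-vertex $u$, $|E^+(u)|+|E^-(u)|=3$ and $|E^+(u)|-|E^-(u)|\equiv 0\pmod 3$ force $|E^+(u)|-|E^-(u)|=\pm 3$, i.e. $u$ is a source or a sink; consequently every edge between two $3$-vertices points from a source to a sink, so the $3$-vertices split into a set $S$ of sources and a set $T$ of sinks with every such edge joining $S$ to $T$.

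For $(k,3^k)$ this is already contradictory: $G-v$ would be a $2$-regular bipartite graph with parts $S,T$, hence a disjoint union of even cycles, forcing $|S|=|T|$ and so $k=|S|+|T|$ even, against $k$ odd. For $(k^2,3^{k-1})$ the perfect matching on the $3$-vertices joins $S$ to $T$, so again $|S|=|T|=:a$ with $2a=k-1$; but then a degree-$k$ vertex $v_i$ receives one edge from each source, sends one edge to each sink, and the lone edge $v_1v_2$ contributes $+1$ to exactly one of $|E^+(v_i)|,|E^-(v_i)|$, so $|E^+(v_i)|-|E^-(v_i)|=\pm 1\not\equiv 0\pmod 3$, a contradiction. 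Thus no realization admits a modular $3$-orientation, so by Lemma~\ref{le24} none is $3$-flowable, which is exactly the input invoked in the first paragraph. The only point needing a little care is the structural rigidity of the two realization families — that it is forced by simplicity together with the exact vertex count $n=k+1$ — after which the argument is a one-line parity computation.
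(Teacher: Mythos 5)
Your first paragraph is exactly the paper's proof: the paper simply cites Luo \emph{et al.} for the fact that no realization of $(k,3^k)$ or $(k^2,3^{k-1})$ with $k$ odd is $3$-flowable, and combines it with the observation that $Z_3$-connectedness implies $3$-flowability. Your additional self-contained verification via modular $3$-orientations (Lemma~\ref{le24}) is also correct --- the forced structure of the realizations and the source/sink parity count both check out --- but it goes beyond what the paper records, which stops at the citation.
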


Next we provide  $Z_3$-connected realizations for some degree sequences.

\setlength{\unitlength}{0.8mm}
\begin{picture}(30,50)
\put(-10,5){\begin{picture}(30,30)
       \put(10,15){\line(1,0){30}}
       \put(20,25){\line(1,0){10}}
       \put(10,35){\line(1,0){30}}
       \put(10,15){\line(0,1){20}}
       \put(40,15){\line(0,1){20}}
       \put(10,15){\line(2,1){20}}
       \put(10,15){\line(1,1){10}}
       \put(20,25){\line(-1,1){10}}
       \put(30,25){\line(1,1){10}}
       \put(40,15){\line(-1,1){10}}
       \put(10,15){\circle*{1.5}}
       \put(7,15){\makebox(0,0){$v_1$}}
       \put(20,25){\circle*{1.5}}
       \put(20,28){\makebox(0,0){$v_5$}}
       \put(10,35){\circle*{1.5}}
       \put(7, 35){\makebox(0,0){$v_3$}}
       \put(40,35){\circle*{1.5}}
       \put(43,35){\makebox(0,0){$v_4$}}
       \put(40,15){\circle*{1.5}}
       \put(43, 15){\makebox(0,0){$v_6$}}
       \put(30,25){\circle*{1.5}}
       \put(30, 28){\makebox(0,0){$v_2$}}
       \put(25,5){\makebox(0,0){(a) $\pi=(4^2, 3^4)$}}
       \end{picture}}

       \put(-10,5){\begin{picture}(30,30)
       \put(60,15){\line(1,0){30}}
       \put(70,25){\line(2,1){20}}
       \put(60,35){\line(1,0){30}}
       \put(60,15){\line(0,1){20}}
       \put(90,15){\line(0,1){20}}
       \put(60,15){\line(3,1){15}}
       \put(60,15){\line(1,1){10}}
       \put(70,25){\line(-1,1){10}}
       \put(70,25){\line(2,-1){20}}
       \put(60,15){\circle*{1.5}}
       \put(57,15){\makebox(0,0){$v_2$}}
       \put(70,25){\circle*{1.5}}
       \put(70,25){\line(1,-1){10}}
       \put(70,28){\makebox(0,0){$v_1$}}
       \put(60,35){\circle*{1.5}}
       \put(57, 35){\makebox(0,0){$v_3$}}
       \put(90,35){\circle*{1.5}}
       \put(93,35){\makebox(0,0){$v_4$}}
       \put(90,15){\circle*{1.5}}
       \put(93, 15){\makebox(0,0){$v_5$}}
       \put(80,15){\circle*{1.5}}
       \put(75,20){\circle*{1.5}}
        \put(80, 12){\makebox(0,0){$v_6$}}
       \put(72, 20){\makebox(0,0){$v_7$}}
       \put(75,5){\makebox(0,0){(b) $\pi=(5, 4, 3^5)$}}
       \end{picture}}

\put(-10,5){\begin{picture}(30,30)
       \put(110,15){\line(1,0){30}}
       \put(120,25){\line(2,1){20}}
       \put(110,35){\line(1,0){30}}
       \put(110,15){\line(0,1){20}}
       \put(140,15){\line(0,1){20}}
       \put(110,15){\line(1,1){10}}
       \put(120,25){\line(-1,1){10}}
       \put(120,25){\line(2,-1){12}}
        \put(132,19){\circle*{1.5}}
       \put(110,15){\circle*{1.5}}
       \put(107,15){\makebox(0,0){$v_5$}}
       \put(120,25){\circle*{1.5}}
       \curve (120,25,130,25)
       \curve (126,18,132,19)
       \curve (130,25,132,19)
       \curve (126,18,140,15)
       \curve (130,25,140,15)
        \curve (120,25,126,18)
        \put(123,18){\makebox(0,0){$v_6$}}
       \put(135,19){\makebox(0,0){$v_7$}}
        \put(130,27){\makebox(0,0){$v_8$}}
       \put(120,28){\makebox(0,0){$v_1$}}
       \put(110,35){\circle*{1.5}}
       \put(107, 35){\makebox(0,0){$v_4$}}
       \put(140,35){\circle*{1.5}}
       \put(143,35){\makebox(0,0){$v_3$}}
       \put(140,15){\circle*{1.5}}
       \put(143, 15){\makebox(0,0){$v_2$}}
       \put(130,25){\circle*{1.5}}
       \put(126,18){\circle*{1.5}}
       \put(125,5){\makebox(0,0){(c) $\pi=(6, 4, 3^6)$}}
       \end{picture}}

       \put(40,5){\begin{picture}(30,30)
       \put(110,15){\line(1,0){30}}
       \put(120,25){\line(2,1){20}}
       \put(110,35){\line(1,0){30}}
       \put(110,15){\line(0,1){20}}
       \put(140,15){\line(0,1){20}}
       \put(110,15){\line(1,1){10}}
       \put(120,25){\line(-1,1){10}}
        \put(132,19){\circle*{1.5}}
       \put(110,15){\circle*{1.5}}
       \put(107,15){\makebox(0,0){$v_5$}}
       \put(120,25){\circle*{1.5}}
       \curve (120,25,130,25)
       \curve (126,18,132,19)
       \curve (130,25,132,19)
       \curve (126,18,140,15)
       \curve (130,25,140,15)
        \curve (120,25,126,18)
        \curve (132,19,140,15)

        \put(123,18){\makebox(0,0){$v_6$}}
       \put(135,19){\makebox(0,0){$v_7$}}
        \put(130,27){\makebox(0,0){$v_8$}}
       \put(120,28){\makebox(0,0){$v_1$}}
       \put(110,35){\circle*{1.5}}
       \put(107, 35){\makebox(0,0){$v_4$}}
       \put(140,35){\circle*{1.5}}
       \put(143,35){\makebox(0,0){$v_3$}}
       \put(140,15){\circle*{1.5}}
       \put(143, 15){\makebox(0,0){$v_2$}}
       \put(130,25){\circle*{1.5}}
       \put(126,18){\circle*{1.5}}
       \put(125,5){\makebox(0,0){(d) $\pi=(5^2, 3^6)$}}

        \put(55,-5){\makebox(0,0){Fig. 1: Realizations of four degree sequences}}
       \end{picture}}
\end{picture}

\begin{lemma}
\label{fig2} Each of the graphs in Fig. 1 is $Z_3$-connected.
\end{lemma}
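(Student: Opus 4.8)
The plan is to treat the four graphs of Fig.~1 one at a time, and in each case reduce the verification of $Z_3$-connectedness to recognizing one of the basic $Z_3$-connected graphs supplied by Lemma~\ref{le21} (an even wheel, $K_5$, or $K_5^-$), plus a short finite list of ``leftover'' boundary functions checked by hand. The workhorse is Lemma~\ref{le23}: given a graph $G$ in the figure and a $3$-vertex $v$ with $N(v)=\{v_1,v_2,v_3\}$, I would form $G_{(v,v_1)}=G-v+v_2v_3$ and show it is $Z_3$-connected. In each case $G_{(v,v_1)}$ either is already an even wheel $W_4$ (equivalently, $K_5$ with two independent edges deleted), or $K_5^-$, or it becomes such a graph after contracting a pair of parallel edges produced by the operation --- such a pair is a copy of $C_2$, which is $Z_3$-connected by Lemma~\ref{le21}(3), and the contraction is harmless by Lemma~\ref{le21}(6) --- and after absorbing any remaining low-degree vertex via Lemma~\ref{le21}(8), which applies since every vertex of $G$ has degree at least $3\ge 2$. (For Fig.~1(a) one checks directly that $G_{(v,v_1)}\cong W_4$ for \emph{every} $3$-vertex $v$; for (c) and (d), which carry a vertex of degree $5$ or $6$, $G_{(v,v_1)}$, after one contraction of parallel edges, contains a wheel $W_4$ or $W_6$ whose rim is read off the figure, with the one leftover vertex attached through Lemma~\ref{le21}(8); (b) is intermediate.) Once $G_{(v,v_1)}$ is known to be $Z_3$-connected, Lemma~\ref{le23} realizes \emph{every} zero-sum $b:V(G)\to Z_3$ with $b(v)\ne 0$.

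Running this over several $3$-vertices $v$ --- and using the graph automorphisms of the figures, e.g.\ $(v_1v_2)(v_3v_4)(v_5v_6)$ for Fig.~1(a), to transfer the conclusion from one $3$-vertex to another --- leaves only those zero-sum functions $b$ that vanish at every $3$-vertex, i.e.\ $b$ supported on the one or two vertices of degree $>3$. These residual functions form a list of at most three per graph; for (a), with ordered vertex set $(v_1,\dots,v_6)$, they are $0$, $(1,2,0,0,0,0)$ and $(2,1,0,0,0,0)$. The zero function is realized by a nowhere-zero $Z_3$-flow, i.e.\ a modular $3$-orientation, whose existence I would exhibit explicitly and invoke Lemma~\ref{le24}; the remaining one or two functions are realized by writing down an explicit nowhere-zero $f:E(G)\to Z_3-\{0\}$ --- the values are almost forced by the degree-$3$ vertices, so the search is short --- and an automorphism halves this last bookkeeping. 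An alternative available in cases where a reduction raises the minimum degree to $4$ is to observe that the graph is triangularly connected and apply Lemma~\ref{triangle}; or to produce a nontrivial $Z_3$-connected subgraph outright and attach the rest by repeated use of Lemma~\ref{le21}(8).

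The step I expect to be the genuine obstacle is the tightest sequence, $(4^2,3^4)$ of Fig.~1(a): this graph is the triangular prism plus one diagonal, it contains no even wheel, no $K_5^-$, and no nontrivial $Z_3$-connected subgraph, and its only two vertices of degree $>3$ are adjacent, so neither Lemma~\ref{triangle} nor a single application of Lemma~\ref{le22} settles it, and $G-v$ fails to be $Z_3$-connected for every vertex $v$ (so the ``route through a $3$-vertex'' shortcut does not close the $b(v)=0$ case). Here the identity $G_{(v,v_1)}\cong W_4$ is essential, and the residual functions supported on the two $4$-vertices must be dispatched by an honest explicit flow rather than by any structural reduction; the other three graphs are looser and the same scheme goes through with only routine bookkeeping.
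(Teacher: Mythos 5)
Your proposal follows essentially the same route as the paper: for each graph, apply Lemma~\ref{le23} at the $3$-vertices (showing the relevant $G_{(v,v_1)}$ reduces to $K_1$ via even wheels and $2$-cycle contractions, using parts (1), (5), (6), (8) of Lemma~\ref{le21}) to dispose of every zero-sum $b$ that is nonzero at some $3$-vertex, and then realize the handful of residual functions supported on the one or two high-degree vertices by explicit orientations with $f\equiv 1$, invoking Lemma~\ref{le24} for $b\equiv 0$. The only divergence is Fig.~1(a), which the paper does not prove at all but simply cites as Lemma 2.2 of \cite{Luo08}; your direct argument for it (via $G_{(v,v_1)}\cong W_4$ at each $3$-vertex plus explicit flows for the two nonzero residual functions on the adjacent $4$-vertices) is sound and consistent with how the paper treats graphs (b)--(d).
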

\begin{proof}
If $G$ is the graph (a) in Fig. 1, then $G$ is $Z_3$-connected by Lemma 2.2  of \cite{Luo08}. Thus, we may  assume that $G$ is one of the graphs (b), (c) and (d) shown in Fig 1.

We first prove here that the graph (b) in Fig. 1 is $Z_3$-connected.
Assume  that $G$ is the graph (b) shown in Fig. 1.
Let $b: V(G)\to Z_3$ be a zero-sum function.
If $b(v_3)\neq 0$, then $G_{(v_3,v_4)}$ contains a 2-cycle $(v_1,v_2)$.  Contracting this 2-cycle and repeatedly contracting all 2-cycles generated in the process, we
obtain $K_1$. By parts (1) and (6) of Lemma~\ref{le21}, $G_{(v_3,v_4)}$
is $Z_3$-connected. It follows by Lemma~\ref{le23} that there exists a nowhere-zero mapping $f: E(G)\to Z_3$ with
$\partial f=b$. Thus, we may assume that $b(v_3)=0$. Similarly, we may assume that $b(v_4)=b(v_5)=b(v_6)=b(v_7)=0$.
This means that for such  $b$, there are only three possibilities to be considered: $(b(v_1),b(v_2))\in \{(0,0), (1,2), (2, 1)\}$.

 If $(b(v_1),b(v_2))=(0,0)$, we show that $G$ is 3-flowable. Note that each vertex of $v_3, v_4, v_5, v_6$ and $v_7$ is of degree 3. The edges of $G$ are oriented as follows: $|E^+(v_3)|=3$, $|E^-(v_4)|=3$, $|E^+(v_5)|=3$,
$|E^-(v_6)|=3$, $|E^+(v_7)|=3$, and $v_2v_1$ is oriented from $v_2$ to $v_1$. It is easy to verify  that
$|E^+(v)|-|E^-(v)|=0$ (mod 3) for each vertex $v\in V(G)$. By Lemma~\ref{le24}, $G$ is 3-flowable. Thus, there is an $f: E(G)\to Z_3^*$ such that $\partial f(v)=b(v)$ for each $v\in V(G)$.

 If $(b(v_1), b(v_2)=(1, 2)$,  note that $b(v)=0$ for each $v\in V(G)-\{v_1, v_2\}$. The edges of $G$ are oriented as follows: $|E^-(v_3)|=3$,
 $|E^+(v_4)|=3$, $|E^-(v_5)|=3$,
$|E^+(v_6)|=3$, $|E^-(v_7)|=3$ and edge $v_2v_1$ is
 oriented from $v_2$ to $v_1$. If $(b(v_1), b(v_2)=(2, 1)$, then the edges of $G$ are oriented as follows: $|E^+(v_3)|=3$, $|E^-(v_4)|=3$, $|E^+(v_5)|=3$,
$|E^-(v_6)|=3$, $|E^+(v_7)|=3$ and edge $v_1v_2$ is
 oriented from $v_1$ to $v_2$.
 In each case,  for each $e\in E(G)$,
define $f(e)=1$.  It is easy to see that for $v\in \{v_3, v_4, v_5, v_6, v_7\}$, $\partial f(v)=0=b(v)$, $\partial f(v_1)=b(v_1)$ and $\partial f(v_2)=b(v_2)$.

Thus, for any zero-sum function $b$, there exists an orientation of $G$
and a nowhere-zero mapping $f: E(G)\to Z_3$ such that
$\partial f=b$. Therefore, $G$ is $Z_3$-connected.

Next, assume that
 $G$ is the graph (c) shown in Fig. 1.
Let $b: V(G)\to Z_3$ be a zero-sum function.
If $b(v_3)\neq 0$, then $G_{(v_3,v_4)}$ contains an even wheel $W_4$ induced by $\{v_1, v_2, v_6, v_7, v_8\}$ with the center $v_1$. Contracting this
$W_4$ and recursively contracting all 2-cycles generated in the process, we get
 $K_1$. By parts (1), (5) and (6) of Lemma~\ref{le21}, $G_{(v_3,v_4)}$
is $Z_3$-connected. By Lemma~\ref{le23}, there exists a nowhere-zero mapping $f: E(G)\to Z_3$ with
$\partial f=b$. Then we may assume $b(v_3)=0$. Similarly, $G_{(v_5,v_4)}$, $G_{(v_6,v_7)}$ and $G_{(v_8,v_7)}$
are all $Z_3$-connected. Thus,  we may assume $b(v_5)=b(v_6)=b(v_8)=0$.

If $b(v_4)\neq 0$, then $G_{(v_4,v_3)}$ contains a 2-cycle $(v_1,v_5)$. Contracting this
2-cycle, we get the graph containing an even wheel $W_4$ induced by $\{v_1, v_2, v_6, v_7, v_8\}$ with the center at $v_1$.
Contracting this wheel $W_4$ and contracting the 2-cycle generated in the process, we get  $K_1$.
By parts (1), (3), (5) and (6) of Lemma~\ref{le21}, $G_{(v_4,v_3)}$
is $Z_3$-connected. Lemma~\ref{le23} shows that there exists a nowhere-zero mapping $f: E(G)\to Z_3$ with
$\partial f=b$. Thus, we may assume $b(v_4)=0$. Similarly, $G_{(v_7,v_8)}$ is $Z_3$-connected. Therefore, we
may assume $b(v_7)=0$.

Thus, we are left to consider the  case that $b(v_i)=0$ for $3\leq i\leq 8$.
This implies that for such  function $b$, we need to consider three cases: $b(v_1)=b(v_2)=0$; $b(v_1)=1$ and $b(v_2)=2$; $b(v_1)=2$ and
$b(v_2)=1$.

In the case that $b(v_1)=b(v_2)=0$, we have $b(v)=0$ for each $v\in V(G)$. The edges of $G$ are oriented as follows: $|E^+(v_3)|=3$, $|E^-(v_4)|=3$, $|E^+(v_5)|=3$,
$|E^-(v_6)|=3$, $|E^+(v_7)|=3$ and $|E^-(v_8)|=3$. It is easy to verify that for each vertex $v\in V(G)$,
$|E^+(v)|-|E^-(v)|=0$ (mod 3). By Lemma~\ref{le24}, $G$ is 3-flowable. Thus, there is an $f: E(G)\to Z_3^*$ such that $\partial f(v)=b(v)$ for each $v\in V(G)$.

In the case that $b(v_1)=1$ and $b(v_2)=2$, we have $b(v)=0$ for each $v\in V(G)-\{v_1, v_2\}$. The edges of $G$ are oriented as follows:  $|E^+(v_3)|=3$,
 $|E^-(v_4)|=3$, $|E^+(v_5)|=3$,
$|E^+(v_6)|=3$, $|E^-(v_7)|=3$, $|E^+(v_8)|=3$. For each $e\in E(G)$,
define $f(e)=1$. It is easy to verify that
 $\partial f(v)=b(v)$ for each $v\in V(G)$.

In the case that $b(v_1)=2$ and $b(v_2)=1$, we have $b(v)=0$ for each $v\in V(G)-\{v_1, v_2\}$. The edges of $G$ are oriented as follows: $|E^-(v_3)|=3$, $|E^+(v_4)|=3$, $|E^-(v_5)|=3$,
$|E^-(v_6)|=3$, $|E^+(v_7)|=3$, $|E^-(v_8)|=3$. For each $e\in E(G)$,
define $f(e)=1$. It is easy to verify that
 $\partial f(v)=b(v)$ for each $v\in V(G)$.

Thus, for every zero-sum function $b$, there exists an orientation of $G$
and a mapping $f: E(G)\mapsto Z_3\setminus \{0\}$ such that
$\partial f=b$. Therefore, $G$ is $Z_3$-connected.

Finally, assume that $G$ is the graph (d) shown in Fig. 1.
Let $b: V(G)\to Z_3$ be a zero-sum function. As in the proof of the case when $G$ is the graph (c) in Fig. 1, we may assume that
that $b(v_i)=0$ for $3\leq i\leq 8$.
This implies that  for such  $b$, we need to consider only three cases: $b(v_1)=b(v_2)=0$; $b(v_1)=1$ and $b(v_2)=2$; $b(v_1)=2$ and $b(v_2)=1$.

In the case that $b(v_1)=b(v_2)=0$,  we have $b(v)=0$ for each $v\in V(G)$. Assume that the edges of $G$ are oriented as follows:  $|E^+(v_3)|=3$, $|E^-(v_4)|=3$, $|E^+(v_5)|=3$,
$|E^+(v_6)|=3$, $|E^-(v_7)|=3$, $|E^+(v_8)|=3$. It is easy to verify that for each vertex $v\in V(G)$,
$|E^+(v)|-|E^-(v)|=0$ (mod 3). By Lemma~\ref{le24}, $G$ is  3-flowable. Thus, there is an $f: E(G)\to Z_3\setminus \{0\}$ such that $\partial f(v)=b(v)$ for each $v\in V(G)$.

In the case that $b(v_1)=1$ and $b(v_2)=2$,  we have $b(v)=0$ for each $v\in V(G)-\{v_1, v_2\}$. Assume that the edges of $G$ are oriented as follows:  $|E^+(v_3)|=3$, $|E^-(v_4)|=3$, $|E^+(v_5)|=3$,
$|E^-(v_6)|=3$, $|E^+(v_7)|=3$, $|E^-(v_8)|=3$. For each $e\in E(G)$,
define $f(e)=1$. It is easy to verify that
 $\partial f(v)=b(v)$ for each $v\in V(G)$.

In the case that $b(v_1)=2$ and $b(v_2)=1$,  we have $b(v)=0$ for each $v\in V(G)-\{v_1, v_2\}$. Assume that the edges of $G$ are oriented as follows: $|E^-(v_3)|=3$, $|E^+(v_4)|=3$, $|E^-(v_5)|=3$,
$|E^+(v_6)|=3$, $|E^-(v_7)|=3$, $|E^+(v_8)|=3$. For each $e\in E(G)$,
define $f(e)=1$. It is easy to verify that
 $\partial f(v)=b(v)$ for each $v\in V(G)$.

Thus, for every zero-sum function $b$, there exists an orientation of $G$
and a mapping $f: E(G)\to Z_3\setminus \{0\}$ such that
$\partial f=b$. Therefore, $G$ is $Z_3$-connected.
\end{proof}

\begin{lemma}
\label{fig3} Each graph in Fig.2 is $Z_3$-connected.
\end{lemma}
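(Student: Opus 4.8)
The plan is to treat each graph $G$ displayed in Fig.~2 separately, following the same strategy used in the proof of Lemma~\ref{fig2}. Fix an arbitrary zero-sum function $b\colon V(G)\to Z_3$; the goal is to produce an orientation of $G$ and a nowhere-zero mapping $f\colon E(G)\to Z_3\setminus\{0\}$ with $\partial f=b$.

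\textbf{Step 1 (eliminate the $3$-vertices).} For each $3$-vertex $v$ of $G$ and a suitably chosen neighbour $v_1\in N(v)$, I would show that the reduced graph $G_{(v,v_1)}$ is $Z_3$-connected. In each instance this is done by exhibiting inside $G_{(v,v_1)}$ a nontrivial $Z_3$-connected subgraph — typically an even wheel $W_4$ (Lemma~\ref{le21}(5)), a $2$-cycle (Lemma~\ref{le21}(3)), or a $K_5$ or $K_5^-$ (Lemma~\ref{le21}(2)) — contracting it, and then recursively contracting all $2$-cycles that arise in the process, until only $K_1$ remains; Lemma~\ref{le21}(1),(6) then yields the conclusion. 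By Lemma~\ref{le23}, whenever $b(v)\neq 0$ for such a $3$-vertex we already obtain the desired orientation and flow, so after applying this to every $3$-vertex we may assume that $b$ vanishes on all of them, i.e. $b$ is supported on the small set $S$ of vertices of degree $\ge 4$.

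\textbf{Step 2 (finitely many residual cases).} Since $\sum_{v\in S}b(v)=0$, only a handful of patterns for $b|_S$ survive (the all-zero pattern; or, when $|S|=2$, the two vertices carrying $1$ and $2$, and similar small cases). For the all-zero pattern I would show that $G$ is $3$-flowable by writing down an explicit modular $3$-orientation — orienting the three edges at each $3$-vertex either all outward or all inward (contributing $0$ modulo $3$), and orienting the few remaining edges so that $|E^+(v)|-|E^-(v)|\equiv 0\pmod 3$ at every vertex of $S$ as well — and then invoking Lemma~\ref{le24}. For each remaining pattern I would exhibit an explicit orientation for which the constant map $f\equiv 1$ satisfies $\partial f=b$: again all-out or all-in at each $3$-vertex, with the orientation at the one or two special vertices chosen so that the edge imbalance there equals the prescribed element of $Z_3$. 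A direct check that $\partial f(v)=b(v)$ at every vertex finishes the case.

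The main obstacle is Step 1: one must verify, graph by graph and $3$-vertex by $3$-vertex, that the prescribed contraction really does reveal an even wheel or a $K_5$/$K_5^-$ and that the subsequent $2$-cycle contractions collapse the whole graph to $K_1$. This is routine but has to be carried out carefully for each configuration appearing in Fig.~2, since the choice of the neighbour $v_1$ and of the first $Z_3$-connected subgraph to contract differs from graph to graph. Once these reductions are established, Step 2 is a short finite verification essentially identical to the corresponding part of the proof of Lemma~\ref{fig2}.
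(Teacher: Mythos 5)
Your plan follows the same two-step scheme as the paper's proof: use Lemma~\ref{le23} to assume $b$ vanishes on every $3$-vertex (verifying in each instance that the reduced graph $G_{(v,v_1)}$ contracts, via an even wheel or a $2$-cycle, down to $K_1$), and then dispose of the remaining zero-sum patterns by explicit modular orientations with $f\equiv 1$. One ingredient of the paper's argument is missing from your outline, and it matters for the bookkeeping: every graph in Fig.~2 has \emph{three or four} vertices of degree at least $4$ (the sequences are $(4^3,3^4)$, $(5,4^2,3^5)$ and $(4^4,3^4)$), so after Step~1 the set of surviving patterns $b|_S$ has size $3^{|S|-1}=9$ or $27$, not the ``handful'' your $|S|=2$ example suggests. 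The paper cuts this down by a further reduction at the $4$-vertices: if $b(v)=0$ at a $4$-vertex $v$, it forms the graph $G(v)$ obtained by deleting $v$ and adding two edges that pair up the neighbours of $v$, shows $G(v)$ is $Z_3$-connected by contraction, and concludes the flow lifts back to $G$; this lets it assume $b$ is \emph{nonzero} on all but one of the high-degree vertices, leaving only three or four explicit patterns per graph. Your argument is not wrong without this step --- each of the $9$ or $27$ residual patterns could in principle be settled by an explicit orientation --- but you should either add the $b(v)=0$ reduction at $4$-vertices or acknowledge and carry out the substantially larger finite verification.
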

\begin{proof}
  We first prove here that the graph (a) in Fig. 2 is $Z_3$-connected.  Denote by $G$ the graph (a)  in Fig. 2.
 We claim that $G$ is  3-flowable. Assume that the edges of the graph are oriented as follows:  $|E^+(v_4)|=3$, $|E^+(v_5)|=0$, $|E^+(v_6)|=3$, $|E^+(v_7)|=0$ and  $v_2v_3$ from $v_2$ to $v_3$. Define $f(e)=1$ for all $e\in E(G)$. It is easy to verify  that $\partial f(v)=0$ for each $v\in V(G)$. By Lemma~\ref{le24}, the graph (a) is  3-flowable.

\vskip 0.2cm
\vskip 0.2cm
\setlength{\unitlength}{0.8mm}
\begin{picture}(30,50)
\put(10,5){\begin{picture}(30,30)
       \put(10,15){\line(1,0){30}}
       \put(10,35){\line(1,0){30}}
       \put(10,15){\line(0,1){20}}
       \put(40,15){\line(0,1){20}}
       \put(10,15){\line(2,1){20}}
       \put(10,15){\line(1,1){10}}
       \put(40,15){\line(-1,1){10}}
       \put(25, 28){\circle*{1.8}}
       \curve (10,35,25,28)
        \curve (40,35,25,28)
        \curve (30,25,25,28)
        \curve (20,25,25,28)
        \curve (20,25,40,15)

       \put(10,15){\circle*{1.8}}
       \put(7,15){\makebox(0,0){$v_2$}}
      \put(20,25){\circle*{1.8}}
      \put(25,30){\makebox(0,0){$v_1$}}
       \put(10,35){\circle*{1.8}}
      \put(7, 35){\makebox(0,0){$v_4$}}
       \put(40,35){\circle*{1.8}}
      \put(43,35){\makebox(0,0){$v_5$}}
       \put(40,15){\circle*{1.8}}
      \put(43, 15){\makebox(0,0){$v_3$}}
       \put(30,25){\circle*{1.8}}
      \put(16, 25){\makebox(0,0){$v_6$}}
       \put(33, 25){\makebox(0,0){$v_7$}}
      \put(25,5){\makebox(0,0){(a) $\pi=(4^3, 3^4)$}}
       \end{picture}}

       \put(10,5){\begin{picture}(30,30)
       \put(60,15){\line(1,0){30}}
       \put(70,25){\line(2,1){20}}
       \put(60,35){\line(1,0){30}}
       \put(60,15){\line(0,1){20}}
       \put(90,15){\line(0,1){20}}
       \put(60,15){\line(3,1){15}}
       \put(60,15){\line(1,1){10}}
       \put(70,25){\line(-1,1){10}}
       \put(70,25){\line(2,-1){20}}
        \curve (75,20,70,15)
        \put(70,15){\circle*{1.8}}
       \put(60,15){\circle*{1.8}}
       \put(57,15){\makebox(0,0){$v_2$}}
       \put(70,25){\circle*{1.8}}
       \put(70,25){\line(1,-1){10}}
       \put(70,28){\makebox(0,0){$v_1$}}
       \put(60,35){\circle*{1.8}}
       \put(57, 35){\makebox(0,0){$v_7$}}
       \put(90,35){\circle*{1.8}}
       \put(93,35){\makebox(0,0){$v_8$}}
       \put(90,15){\circle*{1.8}}
       \put(93, 15){\makebox(0,0){$v_6$}}
       \put(80,15){\circle*{1.8}}
       \put(75,20){\circle*{1.8}}
        \put(80, 12){\makebox(0,0){$v_5$}}
       \put(71, 21){\makebox(0,0){$v_3$}}
        \put(70, 12){\makebox(0,0){$v_4$}}
       \put(75,5){\makebox(0,0){(b) $\pi=(5, 4^2, 3^5)$}}
       \end{picture}}

    \put(10,5){\begin{picture}(30,30)
       \put(110,15){\line(1,0){30}}
       \put(110,35){\line(1,0){30}}
       \put(110,15){\line(0,1){20}}
       \put(140,15){\line(0,1){20}}
       \put(110,15){\circle*{1.8}}
       \put(107,15){\makebox(0,0){$v_3$}}
       \put(125,30){\circle*{1.8}}
       \curve (125,30,110,15)
       \curve (125,30,140,15)

        \curve (125,30,110,35)
       \curve (125,30,140,35)

       \curve (125,20,110,15)
       \curve (125,20,140,15)

        \curve (125,20,110,28)
       \curve (125,20,140,28)
       \put(110,28){\circle*{1.8}}
       \put(140,28){\circle*{1.8}}
       \put(125,20){\circle*{1.8}}

       \put(125,16){\makebox(0,0){$v_2$}}
       \put(107,28){\makebox(0,0){$v_7$}}
       \put(147,28){\makebox(0,0){$v_8$}}
      \put(125,26){\makebox(0,0){$v_1$}}
      \put(110,35){\circle*{1.8}}
       \put(107, 35){\makebox(0,0){$v_5$}}
      \put(140,35){\circle*{1.8}}
      \put(143,35){\makebox(0,0){$v_6$}}
      \put(140,15){\circle*{1.8}}
      \put(143, 15){\makebox(0,0){$v_4$}}
      \put(125,5){\makebox(0,0){(c) $\pi=(4^4, 3^4)$}}
       \end{picture}}
 \put(75,-5){\makebox(0,0){Fig. 2: Realizations of three degree sequences}}

\end{picture}

\vskip 0.8cm
Let $b: V(G)\to Z_3$ be a zero-sum function.
If $b(v_4)\neq 0$, then $G_{(v_4,v_2)}$ contains a 2-cycle $(v_1,v_5)$. Contracting the 2-cycle, we obtain an even wheel $W_4$ induced by $\{v_1, v_2, v_3, v_6, v_7\}$ with the center at $v_3$. By parts (3), (5) and (6) of Lemma~\ref{le21}, $G_{(v_4,v_2)}$
is $Z_3$-connected. By Lemma~\ref{le23}, there exists a nowhere-zero mapping $f: E(G)\to Z_3$ with
$\partial f=b$. Thus, we assume $b(v_4)=0$. By symmetry,  we may assume $b(v_5)=0$.
If $b(v_6)\neq 0$, then $G_{(v_6,v_3)}$ is a graph isomorphic to Fig 1 (a) which is $Z_3$-connected by Lemma~\ref{fig2}.
By Lemma~\ref{le23}, there exists a nowhere-zero mapping $f: E(G)\to Z_3$ with
$\partial f=b$. We thus assume $b(v_6)=0$. By symmetry,
we  assume $b(v_7)=0$.

So far, we may assume $b(v_4)=b(v_5)=b(v_6)=b(v_7)=0$.
 We claim that $b(v_2)\not=0$. If $b(v_2)=0$, then denote  by $G(v_2)$ the graph obtained from $G$ by deleting $v_2$ and adding edges $v_3v_7$ and $v_4v_6$. Contracting all 2-cycles, we finally get an even wheel $W_4$ with the center at $v_1$.   By Lemma~\ref{le21}, $G(v_2)$ is $Z_3$-connected. Thus, there exists a nowhere-zero mapping $f: E(G)\to Z_3$ with
$\partial f=b$. By symmetry,
we  assume that $b(v_3)\not=0$. Thus, we are left to discuss three cases $(b(v_1), b(v_2), b(v_3))\in \{ (1, 1, 1), (0, 1, 2), (2, 2, 2)\}$.

If $(b(v_1), b(v_2), b(v_3))=(1, 1, 1)$, then we orient the edges of $G$ as follows: $|E^+(v_4)|=3$, $|E^+(v_5)|=0$, $|E^+(v_6)|=3$, $|E^+(v_7)|=3$, and
 $v_2v_3$  from $v_2$ to $v_3$;
if $(b(v_1), b(v_2), b(v_3))=(0, 1, 2)$, then we orient edges of $G$ as follows: $|E^+(v_4)|=3$, $|E^+(v_5)|=0$, $|E^+(v_6)|=3$, $|E^+(v_7)|=0$, and
 $v_3v_2$ from $v_3$ to $v_2$;
if $(b(v_1), b(v_2), b(v_3))=(2, 2, 2)$, then we orient edges of $G$ as follows: $|E^+(v_4)|=3$, $|E^+(v_5)|=0$, $|E^+(v_6)|=0$, $|E^+(v_7)|=0$, and
 $v_2v_3$ from $v_2$ to $v_3$.
In each case, for each $e\in E(G)$ define $f(e)=1$.
It is easy to verify that $\partial f(v)=b(v)$ for each $v\in V(G)$.

In each case, there exists an orientation of $G$
and a nowhere-zero mapping $f: E(G)\to Z_3$ such that $\partial f=b$.
Therefore, $G$ is $Z_3$-connected.

Next we consider $G$ is the graph $(b)$ in Fig. 2. Let $b: V(G)\to Z_3$ be a zero-sum function.
If $b(v_4)\neq 0$, then $G_{(v_4,v_3)}$ is isomorphic the graph shown in Fig 1 (b).
By Lemma~\ref{fig2}, $G_{(v_4,v_3)}$
is $Z_3$-connected. By Lemma~\ref{le23}, there exists a nowhere-zero mapping $f: E(G)\to Z_3$ with
$\partial f=b$. Thus, we may assume $b(v_4)=0$. Similarly, if $b(v_5)\not=0$, we can prove that $G_{(v_5,v_3)}$
is $Z_3$-connected. Thus we also assume $b(v_5)=0$.

If $b(v_6)\neq 0$, then $G_{(v_6,v_5)}$ contains a 2-cycle $(v_1,v_8)$. Contracting
2-cycles obtained will result in a graph $K_1$.
By parts (1), (3) and (6) of Lemma~\ref{le21}, $G_{(v_6,v_5)}$
is $Z_3$-connected. By Lemma~\ref{le23}, there exists a nowhere-zero mapping $f: E(G)\to Z_3$ with
$\partial f=b$. Thus, we may assume $b(v_6)=0$. Similarly, if $b(v_7)\not=0$, we can prove that $G_{(v_7,v_8)}$, $G_{(v_8,v_6)}$ is $Z_3$-connected.
Thus, we may assume $b(v_7)=b(v_8)=0$.

From now on we may assume $b(v_4)=b(v_5)=b(v_6)=b(v_7)=b(v_8)=0$.
We claim that $b(v_2)\not=0$. If $b(v_2)=0$, then denote by $G(v_2)$ the graph obtained from $G$ by deleting $v_2$ and adding edges $v_1v_7$ and $v_3v_4$. By contracting all 2-cycles, finally we get $K_1$. By Lemma~\ref{le21}, $G(v_2)$ is $Z_3$-connected. Thus, for $b(v_2)=0$, there exists a nowhere-zero mapping $f: E(G)\to Z_3$ with
$\partial f=b$.
Similarly, we may assume that $b(v_3)\not=0$. Thus, it remains for us to discuss four cases $(b(v_1), b(v_2), b(v_3))\in \{ (1, 1, 1), (0, 1, 2), (2, 2, 2), (0, 2, 1)\}$.

If $(b(v_1), b(v_2), b(v_3))=(1, 1, 1)$, then we orient the edges of $G$  as follows: $|E^+(v_7)|=3$, $|E^+(v_8)|=0$, $|E^+(v_6)|=3$, $|E^+(v_5)|=0$, $|E^+(v_4)|=3$ and
$v_1v_2$ from $v_1$ to $v_2$,  $v_2v_3$  from $v_2$ to $v_3$, $v_1v_3$ from $v_1$ to $v_3$;
if $(b(v_1), b(v_2), b(v_3))=(0, 1, 2)$, then we orient the edges of $G$ as follows: $|E^+(v_7)|=3$, $|E^+(v_8)|=0$, $|E^+(v_6)|=3$, $|E^+(v_5)|=0$, $|E^+(v_4)|=3$ and
$v_2v_1$ from $v_2$ to $v_1$,  $v_3v_2$ from $v_3$ to $v_2$, $v_3v_1$ from $v_3$ to $v_1$;
if $(b(v_1), b(v_2), b(v_3))=(2, 2, 2)$, then we orient the edges of $G$ as follows: $|E^+(v_7)|=0$, $|E^+(v_8)|=3$, $|E^+(v_6)|=0$, $|E^+(v_5)|=3$, $|E^+(v_4)|=0$ and
 $v_2v_1$ from $v_2$ to $v_1$,  $v_3v_2$ from $v_3$ to $v_2$, $v_3v_1$ from $v_3$ to $v_1$;
if $(b(v_1), b(v_2), b(v_3))=(0, 2, 1)$, then we orient the edges of $G$ as follows: $|E^+(v_7)|=0$, $|E^+(v_8)|=3$, $|E^+(v_6)|=0$, $|E^+(v_5)|=3$, $|E^+(v_4)|=0$ and
$v_1v_2$ from $v_1$ to $v_2$,  $v_2v_3$ from $v_2$ to $v_3$, $v_1v_3$ from $v_1$ to $v_3$. In each case, for each $e\in E(G)$ define $f(e)=1$ in such orientation.
Clearly, we can verify that $\partial f(v)=b(v)$ for each $v\in V(G)$.
Thus, $G$ is $Z_3$-connected.

We are left to consider the case that $G$ is the graph $(c)$ in Fig. 2.  Let $b: V(G)\to Z_3$ be a zero-sum function.
If $b(v_5)\neq 0$, then $G_{(v_5,v_6)}$ contains an even wheel $W_4$  with the center at $v_3$.
By parts (5) and (8) of  Lemma~\ref{le21}, $G_{(v_5,v_6)}$
is $Z_3$-connected. By Lemma~\ref{le23}, there exists a nowhere-zero mapping $f: E(G)\to Z_3$ with
$\partial f=b$. Thus, we assume that $b(v_5)=0$. By symmetry, we may assume that $b(v_6)=0$.

If $b(v_7)\neq 0$, then $G_{(v_7,v_5)}$ contains a 2-cycle $(v_2,v_3)$. Contracting this
2-cycle and repeatedly contracting all 2-cycles generated in the process, we finally get $K_1$.
By parts (1) and (6) of Lemma~\ref{le21}, $G_{(v_7,v_5)}$
is $Z_3$-connected. By Lemma~\ref{le23}, there exists a nowhere-zero mapping $f: E(G)\to Z_3$ with
$\partial f=b$. Thus, we may assume $b(v_7)=0$. By symmetry, we may assume $b(v_8)=0$.

 If $b(v_1)=0$, then denote by $G(v_1)$ the graph obtained from $G$ by deleting $v_1$ and adding edges $v_5v_6$ and $v_3v_4$. Contracting all 2-cycles in the process, we finally get  $K_1$. By Lemma~\ref{le21}, $G(v_1)$ is $Z_3$-connected. Thus, for $b(v_1)=0$, there exists a nowhere-zero mapping $f: E(G)\to Z_3$ with
$\partial f=b$. Thus, we may assume that $b(v_1)\not=0$. Similarly, we also assume that  $b(v_2)\not=0$,  $b(v_3)\not=0$ and
$b(v_4)\not=0$. Thus, we only need to discuss four cases $(b(v_1), b(v_2), b(v_3), b(v_4))\in \{(2, 2, 1, 1), (2, 1, 2, 1), (1, 2, 1, 2), (1, 1, 2, 2)\}$ and $b(v_5)=b(v_6)=b(v_7)=b(v_8)=0$.

If $(b(v_1), b(v_2), b(v_3), b(v_4))=(2, 2, 1, 1)$, then we orient the edges of $G$ as follows: $|E^+(v_7)|=0$, $|E^+(v_5)|=3$, $|E^+(v_6)|=0$, $|E^+(v_8)|=3$, and
 $v_1v_3$ from $v_1$ to $v_3$, $v_1v_4$ from $v_1$ to $v_4$, $v_2v_3$ from $v_2$ to $v_3$, $v_2v_4$ from $v_2$ to $v_4$,  $v_4v_3$  from $v_4$ to $v_3$;
if $(b(v_1), b(v_2), b(v_3), b(v_4))=(2, 1, 2, 1)$, then we orient the edges of $G$ as follows: $|E^+(v_7)|=0$, $|E^+(v_5)|=3$, $|E^+(v_6)|=0$, $|E^+(v_8)|=3$, and
 $v_1v_3$ from $v_1$ to $v_3$, $v_1v_4$ from $v_1$ to $v_4$, $v_3v_2$ from $v_3$ to $v_2$, $v_4v_2$ from $v_4$ to $v_2$,  $v_3v_4$ from $v_3$ to $v_4$;
if $(b(v_1), b(v_2), b(v_3), b(v_4))=(1, 2, 1, 2)$, then we orient the edges of $G$ as follows: $|E^+(v_7)|=3$, $|E^+(v_5)|=0$, $|E^+(v_6)|=3$, $|E^+(v_8)|=0$, and
 $v_3v_1$ from $v_3$ to $v_1$, $v_4v_1$ from $v_4$ to $v_1$, $v_2v_3$ from $v_2$ to $v_3$, $v_2v_4$ from $v_2$ to $v_4$,  $v_4v_3$ from $v_4$ to $v_3$;
if $(b(v_1), b(v_2), b(v_3), b(v_4))=(1, 1, 2, 2)$, then we orient the edges of $G$ as follows: $|E^+(v_7)|=0$, $|E^+(v_5)|=3$, $|E^+(v_6)|=0$, $|E^+(v_8)|=3$, and
 $v_3v_1$ from $v_3$ to $v_1$, $v_4v_1$ from $v_4$ to $v_1$, $v_3v_2$ from $v_3$ to $v_2$, $v_4v_2$ from $v_4$ to $v_2$,  $v_4v_3$ from $v_4$ to $v_3$. In each case, for each $e\in E(G)$ define $f(e)=1$ in such orientation.
Clearly, we can verify that $\partial f(v)=b(v)$ for each $v\in V(G)$.
Thus, $G$ is $Z_3$-connected.
\end{proof}

\section{Some special cases}

Throughout this section, all sequences are graphic sequences. We provide $Z_3$-connected realizations for
 some graphic sequences.

\begin{lemma}
\label{31} Suppose that one of the following holds,

(i) $n\geq 6$ and $\pi=(n-2, 4, 3^{n-2})$;

(ii) $n\geq 5$ and  $\pi=(4^{n-4}, 3^4)$;

(iii) $n\geq 7$ and  $\pi=(5, 4^{n-6}, 3^5)$.

 Then $\pi$ has a $Z_3$-connected realization.
\end{lemma}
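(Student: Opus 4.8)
The plan is to prove the three statements (i), (ii), (iii) by explicitly constructing a realization and then showing $Z_3$-connectivity using the tools of Section~2, especially Lemmas~\ref{le21}, \ref{le22}, \ref{le23} and the base cases in Lemmas~\ref{fig2} and~\ref{fig3}. For each case I would set up an induction on $n$: the base cases are precisely the small realizations drawn in Figures~1 and~2, which have already been verified to be $Z_3$-connected in Lemmas~\ref{fig2} and~\ref{fig3}, and the inductive step will add a single $3$-vertex (or a pair of vertices) in a controlled way so that Lemma~\ref{le23} or Lemma~\ref{le22} can be invoked.

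For part (i), $\pi=(n-2,4,3^{n-2})$: I would take the vertex $x$ of degree $n-2$ adjacent to the vertex $y$ of degree $4$ and to $n-3$ of the $3$-vertices, arrange enough triangles through $x$ so the graph is triangularly connected, and then either appeal to Lemma~\ref{triangle} after locating a nontrivial $Z_3$-connected subgraph (an even wheel at $x$, say), or induct: deleting a suitable $3$-vertex $v$ with $x\in N(v)$ and contracting via the operation $G_{(v,\cdot)}$ reduces $\pi$ to the same family with $n$ replaced by $n-1$; the base case $n=6$ is $(4,4,3^4)$, which is Fig.~1(a). For part (ii), $\pi=(4^{n-4},3^4)$: here the natural base cases are Fig.~2(a) $(4^3,3^4)$ and Fig.~2(c) $(4^4,3^4)$, and the inductive step removes two vertices of degree $4$ that have been joined into the rest of the graph so that the residual sequence is again of the form $(4^{(n-2)-4},3^4)$; alternatively one builds the realization as a ``chain'' of even wheels / triangular blocks sharing $4$-vertices and applies Lemma~\ref{triangle}. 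For part (iii), $\pi=(5,4^{n-6},3^5)$: the base case $n=7$ is $(5,4,3^5)$, which is Fig.~1(b), and $n=8$ gives $(5,4^2,3^5)$, which is Fig.~2(b); the inductive step again deletes a $4$-vertex pair (or one $4$-vertex split into the structure) to drop from $n$ to $n-1$.

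In every case the mechanics are the same: verify the target sequence is graphic and that the residual/contracted sequence is graphic (Theorem~\ref{th20}), choose the realization $G$ so that after the reduction operation $G_{(v,v_1)}$ one lands on a graph already known to be $Z_3$-connected (either a smaller member of the same family, handled by induction, or one of the figure graphs), and then conclude $Z_3$-connectivity of $G$ itself by Lemma~\ref{le23} (when the ``bad'' vertex has nonzero boundary) combined with a direct modular $3$-orientation argument via Lemma~\ref{le24} (when all the relevant boundaries are zero, i.e. $G$ need only be shown $3$-flowable). The pattern of handling the remaining few boundary configurations by exhibiting explicit orientations with $f\equiv 1$ is exactly what was done in Lemmas~\ref{fig2} and~\ref{fig3}.

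The main obstacle I anticipate is making the inductive step genuinely clean: one must pick the vertices to delete (and the new edge to add under the $G_{(v,v_1)}$ operation) so that the resulting smaller graph is \emph{simple} and has \emph{exactly} the residual degree sequence of the same family — this requires the newly created edge not to duplicate an existing one and the degrees to land correctly, which constrains which realization one starts from. A secondary difficulty is that the families in (ii) and (iii) decrease $n$ by $2$ under the most natural two-vertex deletion, so parity forces two base cases per family (an even and an odd value of $n$), and one has to check that each base case really is one of the Fig.~1/Fig.~2 graphs or is otherwise directly $Z_3$-connected by Lemma~\ref{le21}. Once the correct realizations and reductions are fixed, the verification of the modular $3$-orientations and the finitely many boundary cases is routine and parallels the proofs of Lemmas~\ref{fig2} and~\ref{fig3}.
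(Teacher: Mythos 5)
Your overall toolkit (base cases from Figures 1--2, then grow the realization and certify $Z_3$-connectivity via Lemma~\ref{le21}) is the right one, but the inductive reductions you propose for (ii) and (iii) do not survive a degree count, and this is a genuine gap rather than a presentational one. If $G$ realizes $(4^{n-4},3^4)$ and you delete one or two $4$-vertices, each deleted vertex decrements the degrees of its four neighbours; there is no way to absorb those six or eight decrements so that the remaining graph realizes $(4^{(n-1)-4},3^4)$ or $(4^{(n-2)-4},3^4)$ -- you inevitably create degree-$2$ vertices or too many $3$-vertices. The available reduction lemmas do not rescue this: Lemma~\ref{le23} only removes a $3$-vertex, and Lemma~\ref{le22} lifts a single pair of edges at a vertex of degree at least $4$, after which the vertex has degree $2$ and the lemma can no longer be applied to it. The same objection applies to your step for (iii). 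The idea you are missing is the paper's gluing construction: for $n=2k$ (resp.\ $2k+1$) take two $Z_3$-connected realizations $G_1,G_2$ of $(4^{k-4},3^4)$ (resp.\ of $(4^{k-4},3^4)$ and $(4^{k-3},3^4)$), and add two edges joining two $3$-vertices of $G_1$ to two $3$-vertices of $G_2$; the four endpoints become $4$-vertices, the counts come out to exactly $(4^{n-4},3^4)$, and $Z_3$-connectivity is immediate from parts (6) and (8) of Lemma~\ref{le21} since $G/G_1$ is $G_2$ plus a vertex of degree $2$ attached to a $Z_3$-connected graph. For (iii) one instead attaches $G_2$ by one edge at a $4$-vertex and one at a $3$-vertex of $G_1$, producing the single $5$-vertex. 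This halving induction also explains why the paper needs the string of base cases $n=5,\dots,9$ rather than just two parities.

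For (i) your plan is closer to workable but still not what you want to execute. The route through Lemma~\ref{le23} requires, for \emph{every} $3$-vertex $v$ with $b(v)\neq 0$, a different graph $G_{(v,\cdot)}$ to be shown $Z_3$-connected, followed by explicit modular orientations for the residual $b$'s; that is exactly the labour of Lemmas~\ref{fig2}--\ref{fig3} and does not scale cleanly with $n$. The paper sidesteps all of this for $n\geq 9$ by a direct construction containing a large even wheel: for odd $n$, attach a $K_4^-$ to the hub of $W_{n-5}$ by three edges and contract the wheel to leave an even $W_4$; for even $n$, identify the hub of $W_{n-6}$ with the $4$-vertex of the graph of Fig.~1(a). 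Parts (5), (6) and (8) of Lemma~\ref{le21} then finish without any case analysis on $b$. Your alternative suggestion via Lemma~\ref{triangle} (triangular connectivity plus an even-wheel subgraph) is in the same spirit and could be made to work, but you would still need to exhibit the concrete realization.
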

\begin{proof}
(i)  If $n=6$, then by Lemma~\ref{fig2}, $\pi$ has a
$Z_3$-connected realization $G$  in Fig. 1 (a). Thus, we assume that $n\ge 7$.

If $n=7,8$, then by Lemma~\ref{fig2}, $\pi$ has a $Z_3$-connected realization
$G$  in Fig. 1 (b) (c). Thus, we assume that $n\geq 9$.

Assume that $n$ is odd. Let $W_{n-5}$ be an even  wheel with the center at $v_1$
and $K_4^-$ on vertex set $\{u_1, u_2, u_3, u_4\}$ with $d_{K_4^-}(u_1)=d_{K_4^-}(u_3)=2$. Denote by  $G$  the graph obtained from $W_{n-5}$ and $K_4^-$ by
adding edges $u_iv_1$ for each $i\in \{1, 2, 3\}$.  Obviously, the graph $G$ has a degree sequence
$(n-2, 4, 3^{n-2})$. By part (5) of Lemma~\ref{le21}, $W_{n-5}$ is $Z_3$-connected.
The graph $G/W_{n-5}$ is  an even wheel $W_4$. By part (5) and (6) of Lemma~\ref{le21}, $G$ is $Z_3$-connected.
This means that $\pi$ has a $Z_3$-connected realization.

Assume that $n$ is even. Let $G_0$ be the graph in Fig. 1 (a)  and
$W_{n-6}$ be an even wheel with the center at $u_1$. Denote by $G$ the graph obtained from $W_{n-6}$ and $G_0$ by identifying $u_1$ and $v_1$.
Clearly, $G$ has a degree sequence $(n-2, 4, 3^{n-2})$. Since $n\ge 10$ is even, $W_{n-6}$ is
$Z_3$-connected by (5) of Lemma~\ref{le21}. By Lemma~\ref{fig2}, $G_0$ is $Z_3$-connected. This shows that $G$ is
$Z_3$-connected.

(ii)  If $n=5$, then an even wheel $W_4$ is a $Z_3$-connected realization of $\pi$;
if $n=6$, then
by Lemma~\ref{fig2}, $\pi$ has a
$Z_3$-connected realization $G$ in Fig. 1 (a); if $n=7$, then by Lemma~\ref{fig3}, $\pi$ has a $Z_3$-connected realization shown in Fig. 2 (a);
if $n=8$, then by Lemma~\ref{fig3}, the graph (c) in Fig. 2 is $Z_3$-connected realization of $\pi$.
If $n=9$, then let $G_1$ be an even wheel $W_4$ induced by $\{u_0, u_1, u_2, u_3, u_4\}$ with the center at $u_0$
and $G_2$ be a $K_4^-$ induced by $\{v_1, v_2, v_3, v_4\}$ with $d_{G_2}(v_1)=d_{G_2}(v_3)=3$. We construct a graph $G$ from $W_4$ and $K_4^-$ by
adding three edges $u_1v_2$, $u_2v_4$ and $u_3v_1$. Then $G$ is a $Z_3$-connected realization of
$(4^5, 3^4)$. Thus, we assume that $n\geq 10$.

Assume that $n=2k$, where $k\ge 5$. By induction of hypothesis,  let $G_i$ be a $Z_3$-connected realization of the degree sequence $(4^{k-4}, 3^4)$ for $i\in \{1, 2\}$.
Assume that $n=2k+1$, where $k\ge 5$. By induction hypothesis, let $G_1$ be a $Z_3$-connected realization of the degree sequence $(4^{k-4}, 3^4)$ and $G_2$
be a $Z_3$-connected realization of the degree sequence $(4^{k-3}, 3^4)$. In each case, we construct a graph $G$ from $G_1$ and $G_2$ by connecting
a pair of  3-vertices of $G_1$ to a pair of 3-vertices of $G_2$ one by one.
It is easy to verify that $G$ is a $Z_3$-connected realization of the degree sequence
$(4^{n-4}, 3^4)$.

(iii) If $n=7$, then by Lemma~\ref{fig2},  the graph (b) in
Fig. 1 is a $Z_3$-connected realization of $\pi$; if $n=8$, then by Lemma~\ref{fig3}, the graph (b) in Fig. 2 is
a $Z_3$-connected realization of $\pi$. If $n=9$, then $\pi=(5, 4^3, 3^5)$. Let $G_1$ be an even wheel $W_4$ induced by $\{u_0, u_1, u_2, u_3, u_4\}$ with the center at $u_0$
and $G_2$ be a $K_4^-$ induced by $\{v_1, v_2, v_3, v_4\}$ with $d_{G_2}(v_1)=d_{G_2}(v_3)=3$. We construct a graph $G$ from $W_4$ and $K_4^-$ by adding
 three edges $u_0v_2$, $u_1v_1$, $u_2v_4$. We conclude that $G$ is a $Z_3$-connected realization of  degree sequence
$(5, 4^3, 3^5)$. Thus, $n\ge 10$.

Assume that $n=2k$, where $k\ge 5$. By (ii), let $G_1$ and $G_2$ be  $Z_3$-connected realizations of
 degree sequence $(4^{k-4}, 3^4)$. Assume that $n=2k+1$, where $k\ge 5$. By (ii), let $G_1$ be a $Z_3$-connected realization of degree sequence $(4^{k-4}, 3^4)$ and $G_2$
be a $Z_3$-connected realization of  degree sequence $(4^{k-3}, 3^4)$.
In each case, choose one 4-vertex $u_1$ and one 3-vertex $u_2$ of $G_1$; choose two 3-vertices $v_1, v_2$ of $G_2$.
We construct a graph $G$ from $G_1$ and $G_2$ by adding $u_1v_1$ and $u_2v_2$.
Thus, $G$ is a $Z_3$-connected realization of degree sequence
$(5, 4^{n-6}, 3^5)$.
\end{proof}

\begin{lemma}
\label{34}
If $\pi=(n-3, 3^{n-1})$, then $\pi$ has not a $Z_3$-connected realization.
\end{lemma}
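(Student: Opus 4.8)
The plan is to prove the statement directly: every realization $G$ of $\pi=(n-3,3^{n-1})$ fails to be $Z_3$-connected, and I would exhibit a single zero-sum function $b$ witnessing this. Let $w\in V(G)$ be a vertex of degree $n-3$ (this vertex is unique when $n\ge 7$; when $n=6$ we have $\pi=3^6$ and may take $w$ to be any vertex), and note that every other vertex has degree $3$. Define $b(v)=1$ for all $v\ne w$, and let $b(w)\in Z_3$ be the unique value making $\sum_{v\in V(G)}b(v)=0$, i.e.\ $b(w)\equiv 1-n\pmod 3$. Suppose, for contradiction, that $G$ is $Z_3$-connected; then there are an orientation of $G$ and a map $f\colon E(G)\to Z_3\setminus\{0\}$ with $\partial f=b$.

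The main tool is to lift $f$ to the integers. Let $f'\colon E(G)\to\{1,-1\}\subseteq\mathbb Z$ be obtained from $f$ by replacing the group element $2$ with $-1$, and for each $v\in V(G)$ put $\sigma(v)=\sum_{e\in E^+(v)}f'(e)-\sum_{e\in E^-(v)}f'(e)\in\mathbb Z$, computed with respect to the given orientation. Three easy observations then hold: since $f'\equiv f\pmod 3$, we get $\sigma(v)\equiv\partial f(v)=b(v)\pmod 3$; since $\sigma(v)$ is a sum of $d(v)$ terms each equal to $\pm 1$, we have $|\sigma(v)|\le d(v)$ (and $\sigma(v)$ has the same parity as $d(v)$); and since every edge contributes $+f'(e)$ at its tail and $-f'(e)$ at its head, $\sum_{v\in V(G)}\sigma(v)=0$ in $\mathbb Z$.

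Now I would finish as follows. For each $v\ne w$ the vertex has degree $3$, so $\sigma(v)\in\{-3,-1,1,3\}$, while $\sigma(v)\equiv b(v)=1\pmod 3$; the only element of $\{-3,-1,1,3\}$ congruent to $1$ modulo $3$ is $1$, so $\sigma(v)=1$. Hence $\sum_{v\ne w}\sigma(v)=n-1$, and therefore $\sigma(w)=-(n-1)$ by the third observation. But the second observation applied to $w$ gives $|\sigma(w)|\le d(w)=n-3<n-1$, a contradiction. Thus $G$ is not $Z_3$-connected, and since $G$ was an arbitrary realization, $\pi$ has no $Z_3$-connected realization.

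There is no serious obstacle once the right $b$ is chosen: the point is that forcing $b\equiv 1$ on all $n-1$ vertices of degree $3$ pins each of their $\sigma$-values to $1$, which makes the required boundary at the single low-degree vertex $w$ too large in absolute value to be attainable. One should merely keep in mind that this argument genuinely uses $Z_3$-connectivity rather than mere $3$-flowability (indeed many realizations of $\pi$ do admit a modular $3$-orientation), and that the degenerate case $n=6$ needs no separate treatment, since there $d(w)=3<5=n-1$ and the displayed computation applies verbatim.
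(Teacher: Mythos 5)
Your proof is correct, and it takes a genuinely different route from the paper's. The paper fixes the vertex $u$ of degree $n-3$ and its two non-neighbours $x_1,x_2$, then runs a lengthy case analysis (whether $x_1x_2$ is an edge, the size of $N(x_1)\cap N(x_2)$, the parities of the paths and cycles making up $G[N(u)]$), using modular $3$-orientations to restrict the structure and then choosing, case by case, a zero-sum function $b$ for which it asserts no nowhere-zero $f$ exists; the verification of non-existence is left to the reader in each case. You instead use a single zero-sum function $b$ ($b\equiv 1$ off the low-degree vertex $w$) together with the standard integer lift $f'\colon E(G)\to\{1,-1\}$ of a nowhere-zero $Z_3$-map: the congruence $\sigma(v)\equiv b(v)\pmod 3$ combined with $|\sigma(v)|\le d(v)$ and the parity of $\sigma(v)$ pins $\sigma(v)=1$ at every $3$-vertex, and then $\sum_v\sigma(v)=0$ forces $|\sigma(w)|=n-1>n-3=d(w)$, a contradiction. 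Your argument is shorter, makes no structural assumptions on the realization, handles all $n\ge 6$ (including $n=6$, where $\pi=(3^6)$) uniformly, and every step is fully verified rather than asserted; it also isolates exactly why the obstruction disappears for $(k,3^k)$ with $k$ even, namely that there $d(w)=n-1$ is just large enough. The only mild caveat is that your three observations about $\sigma$ silently use that $G$ is loopless, which holds since realizations here are simple graphs. I see no gap.
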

\begin{proof}
Suppose otherwise that $G$ has a $Z_3$-connected realization of degree sequence $(n-3, 3^{n-1})$.
Let $V(G)=\{u, u_1,  \ldots, u_{n-3}, x_1, x_2\}$, $N_G(u)=\{u_1, \ldots, u_{n-3}\}$ ($N$ for short),
  and $X=\{x_1, x_2\}$.
We now consider the following two cases.

\medskip

\n{\bf Case 1.} $x_1x_2\in E(G)$.

\medskip

Since $G$ is $Z_3$-connected, $G$ is 3-flowable.
By Lemma~\ref{le24} and symmetry, we may assume that $|E^+(x_1)|=3$ and $|E^-(x_2)|=3$. Since $d(u_i)=3$ for $i\in \{1, \ldots, n-3\}$, by Lemma~\ref{le24}, either $|E^+(u_i)|=3$  or  $|E^-(u_i)|=3$.
This implies that there exists no vertex $u_i$ in $N$ such that $u_ix_1, u_ix_2\in E(G)$.
Thus, $G[N]$ is   the union of two paths $P_1$ and $P_2$. We  relabel the vertices of $N$ such that $P_1=u_1\ldots u_k$ and $P_2=u_{k+1}\ldots u_{n-3}$.

Suppose first that $x_1u_1, x_1u_k\in E(G)$ and $x_2u_{k+1}, x_2u_{n-3}\in E(G)$.
Since $G$ is  3-flowable, by Lemma~\ref{le24}, $P_i$ contains odd  number of vertices for each $i\in \{1, 2\}$.
Define $b(u)=b(x_1)=b(x_2)=1$ and
$b(u_i)=0$ for each $i\in \{1, \ldots, n-3\}$. It is easy to verify that there exists no $f:E(G)\to Z_3^*$
such that $\partial f(v)=b(v)$ for each $v\in V(G)$, contrary to that $G$ is $Z_3$-connected.

Next, suppose that $x_1u_1, x_1u_{k+1}\in E(G)$ and $x_2u_k, x_2u_{n-3}\in E(G)$.
Since $G$ is  3-flowable, by Lemma~\ref{le24}  $P_i$ contains even number of vertices for each $i\in \{1, 2\}$.
Define $b(u)=1$, $b(x_2)=2$ and
$b(u_i)=b(x_1)=0$ for each $i\in \{1, \ldots, n-3\}$. It is easy to verify that there exists no $f:E(G)\to Z_3^*$
such that $\partial f(v)=b(v)$ for each $v\in V(G)$, contrary to that $G$ is $Z_3$-connected.

\medskip

\n{\bf Case 2.} $x_1x_2\notin E(G)$.

\medskip

Since $d(x_i)=3$ for each $i=1, 2$, $0\le |N(x_1)\cap N(x_2)|\le 3$.
Assume first that $|N(x_1)\cap N(x_2)|=3$. We assume, without loss of generality, that
 $u_1, u_2, u_3\in N(x_1)\cap N(x_2)$.  The subgraph induced by $\{u, x_1, x_2,u_1, u_2, u_3\}$ is $K_{3, 3}$ which is not $Z_3$-connected by part (4) of Lemma~\ref{le21}.
By part (6) of Lemma~\ref{le21}, $G$ is not $Z_3$-connected, a contradiction.

Assume that $|N(x_1)\cap N(x_2)|=2$. We assume, without loss of generality, that
 $u_1, u_2\in N(x_1)\cap N(x_2)$.  Since $G$ is 3-flowable, the graph $H$ induced by  $N\setminus \{u_1, u_2\}$ consists of even cycles and a path of length even.  This means that $n$ is even. If $n=6$, then this case cannot occur. Thus $n\ge 8$.
Define $b(x_1)=1$, $b(x_2)=2$ and
$b(u_i)=b(u)=0$ for each $i\in \{1,  \ldots, n-3\}$. It is easy to verify that there exists no $f:E(G)\to Z_3^*$
such that $\partial f(v)=b(v)$ for each $v\in V(G)$, contrary to that $G$ is $Z_3$-connected.

Next, assume that $|N(x_1)\cap N(x_2)|=1$. We assume, without loss of generality, that
$u_1\in N(x_1)\cap N(x_2)$. The graph induced by  $N\setminus \{u_1\}$ consists of even cycles and two paths $P_1$ and $P_2$.
Since $G$ is 3-flowable, $P_i$ contains odd vertices for each $i\in \{1, 2\}$.
Then $n$ is even. If $n=6, 8$, then this case cannot occur.
Thus, we assume that  $n\ge 10$.
Define $b(x_1)=1$, $b(x_2)=2$ and
$b(u_i)=b(u)=0$ for each $i\in \{1, \ldots, n-3\}$. In this case, there exists no $f:E(G)\to Z_3^*$
such that $\partial f(v)=b(v)$ for each $v\in V(G)$, contrary to that $G$ is $Z_3$-connected.

Finally, assume that $|N(x_1)\cap N(x_2)|=0$.
Then the graph induced by the vertices of $N$ consists of  three paths $P_1$, $P_2$ and $P_3$, together with even cycles.
We relabel the vertices of $N$ such that  $P_1=u_1\ldots u_s$, $P_2=u_{s+1}\ldots u_t$ and $P_3=u_{t+1}\ldots u_{n-3}$. By symmetry, we consider two cases: $x_1$ is adjacent to both the end vertices of some $P_i$; $x_1$ is adjacent to one of each $P_j$ for $j\in \{1, 2, 3\}$.

In the former case, we may assume that  $u_1x_1, u_sx_1\in E(G)$ and $x_2u_{s+1}, x_2u_t$. Since $G$ is 3-flowable, by Lemma~\ref{le24},  both $|V(P_1)|$ and $|V(P_2)|$ are
odd. If $|V(P_3)|$ is odd,
then define $b(x_1)=1$, $b(x_2)=2$ and
$b(u_i)=b(u)=0$ for each $i\in \{1, \ldots, n-3\}$.
If $|V(P_3)|$ is even,
then define $b(x_1)=1$, $b(x_2)=1$, $b(u)=1$ and
$b(u_i)=0$ for each $i\in \{1, \ldots, n-3\}$.
In either case,  there exists no $f:E(G)\to Z_3^*$
such that $\partial f(v)=b(v)$ for each $v\in V(G)$, contrary to that $G$ is $Z_3$-connected.

In the latter case,  $x_1u_1, x_1u_{s+1},x_1u_{t+1}, x_2u_s, x_2u_t, x_2u_{n-3}\in E(G)$.
It follows that $|V(P_1)|, |V(P_2)|$ and $|V(P_3)|$ have the same parity. If each of $|V(P_i)|$ for $i\in\{1, 2, 3\}$ is even, then
define $b(x_1)=1$, $b(x_2)=1$, $b(u)=1$ and
$b(u_i)=0$ for each $i\in \{1,  \ldots, n-3\}$. If each of $|V(P_i)|$ for $i\in\{1, 2, 3\}$ is odd, then define
define $b(x_1)=1$, $b(x_2)=2$, $b(u)=b(u_i)=0$
for each $i\in \{1,  \ldots, n-3\}$. In either case,  there exists no $f:E(G)\to Z_3^*$
such that $\partial f(v)=b(v)$ for each $v\in V(G)$, contrary to that $G$ is $Z_3$-connected.
\end{proof}

\vskip 0.8cm

\section{Proof of Theorem~\ref{th1}}

In order to prove Theorem~\ref{th1}, we establish the following lemma.

\begin{lemma}\label{41}
Suppose that $\pi=(d_1, \ldots, d_n)$ is a nonincreasing graphic sequence with $d_n\ge 3$.
If $d_1=n-2$, then $\pi$ has a $Z_3$-connected realization.
\end{lemma}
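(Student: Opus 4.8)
The plan is to induct on $n$, using the residual-sequence machinery of Theorem~\ref{th20} together with the reduction lemmas of Section~2, and to peel off the special families already handled in Lemma~\ref{31} and Lemma~\ref{fig2}--\ref{fig3}. Write $\pi=(d_1,\ldots,d_n)$ with $d_1=n-2$ and $d_n\ge 3$, and let $v_1$ be a vertex of maximum degree. First I would dispose of small $n$ (say $n\le 8$) and of the low-complexity cases by hand: if $\pi$ is one of $(4^{n-4},3^4)$, $(5,4^{n-6},3^5)$, or $(n-2,4,3^{n-2})$ then Lemma~\ref{31} already gives a $Z_3$-connected realization, and several tiny sequences are covered by the explicit graphs in Figures~1 and~2. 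So I may assume $n$ is reasonably large and $\pi$ is not on that list.

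The main step is a case analysis on $d_2$ and $d_n$. If $d_n\ge 4$, then $d_{n-3}\ge 4$ and Theorem~\ref{th13} finishes immediately, so assume $d_n=3$. Now I would split according to how many entries equal $3$. If $d_2\le n-3$ as well, I would try to realize $\pi$ so that $v_1$ is joined to an even wheel $W_k$ sitting on $k=d_1-?$ of the high-degree vertices (using part~(5) of Lemma~\ref{le21}), contract that $Z_3$-connected wheel, and apply parts~(6) and~(8) of Lemma~\ref{le21} to absorb the remaining vertices, whose degrees have now dropped. When $v_1$ is adjacent to too few vertices of degree $\ge 4$ to build a wheel directly, I would instead pass to the residual sequence $\bar\pi$ obtained by deleting a $3$-vertex $u$ with $N(u)$ chosen among the large-degree vertices: by Theorem~\ref{th20}, $\bar\pi$ is graphic, and one checks $\bar\pi$ still satisfies either the hypothesis of this lemma (with $n-1$ in place of $n$, possibly after noting $d_1-1=(n-1)-2$), or the hypothesis of Theorem~\ref{th12} ($\Delta=n'-1$), or that of Theorem~\ref{th13}. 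In the first case induction gives a $Z_3$-connected realization $G'$ of $\bar\pi$; then I add $u$ back, joined to the two vertices of $G'$ whose degrees were lowered plus one more, so that $e(u,G')\ge 2$, and Lemma~\ref{le21}(8) (or Lemma~\ref{le23}, reversing the $G_{(v,v_1)}$ operation) upgrades $G'$ to a $Z_3$-connected realization of $\pi$.

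The delicate point, and the one I expect to be the main obstacle, is ensuring at each reduction that (a) the residual sequence is still \emph{realizable with the adjacency pattern I need} — i.e. that I can choose $N(u)$ to consist of vertices that are still "large" afterward, so the decremented sequence stays in the inductively tractable range and does not collapse to one of the forbidden sequences $(n-3,3^{n-1})$, $(k,3^k)$, $(k^2,3^{k-1})$ of Theorem~\ref{th1} or the exceptions of Theorem~\ref{th2} — and (b) when I re-attach $u$, I genuinely get $e(u,G')\ge 2$ into an already-$Z_3$-connected piece rather than merely into a subgraph. Handling (a) will require a short separate argument, à la the Kleitman--Wang / Hakimi exchange, that among all realizations of $\pi$ there is one in which $v_1$'s neighborhood contains all vertices of degree $\ge 4$ except possibly one; this is where most of the real work sits. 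The remaining arithmetic — counting parities, verifying that the built wheel is even, checking degree bookkeeping after contraction — is routine and I would only sketch it, citing Lemmas~\ref{le21}, \ref{le22}, \ref{triangle}, and \ref{le23} as the tools that close each branch.
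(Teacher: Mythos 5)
Your overall strategy---induction on $n$ via the residual sequence $\bar\pi$ of Theorem~\ref{th20}, with Theorem~\ref{th13} disposing of $d_{n-3}\ge 4$, Lemma~\ref{31} and the graphs of Figures~1--2 covering the small and special cases, and wheel constructions plus parts (5), (6), (8) of Lemma~\ref{le21} closing the rest---is the same as the paper's. But you have misplaced where the difficulty lies, and the case that actually carries the weight of the proof is left as a sketch. The residual reduction (delete a $3$-vertex, decrement $d_1,d_2,d_3$) only preserves the hypothesis $d_n\ge 3$ when $d_3\ge 4$; the paper's Claim~2 uses exactly this to force $d_3=3$, after ruling out that $\bar\pi$ degenerates to $(k,3^k)$ or $(k^2,3^{k-1})$ by a quick degree count. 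Once $d_3=3$ you cannot ``instead pass to the residual sequence'' as your proposal suggests---that move is unavailable precisely when there are too few large entries, since $d_3-1=2$ would violate the minimum-degree hypothesis. What remains is $\pi=(n-2,d_2,3^{n-2})$ with $d_2$ even; $d_2=4$ is Lemma~\ref{31}(i), and for $d_2\ge 6$ the paper gives two explicit parity-dependent constructions (an even wheel $W_{n-d_2+2}$ or $W_{n-d_2+1}$ centered at $v_1$, a set $S$ of $d_2-4$ or $d_2-3$ vertices joined to $v_1$ and $v_2$, a partial perfect matching on $S$, and one extra vertex $x$). Your ``join $v_1$ to an even wheel, contract, absorb'' is the right instinct but does not specify the construction, and the degree and parity bookkeeping there is the substantive content of the lemma, not routine.

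Separately, your ``delicate point (a)'' is a non-issue under this approach and would send you down an unnecessary path. You never need an exchange argument showing that some realization of $\pi$ has $v_1$ adjacent to all large-degree vertices: the reduction goes in the other direction. One takes a $Z_3$-connected realization $\bar G$ of the numerical sequence $\bar\pi$ (which exists by induction or by Theorem~\ref{th12}), then builds a realization of $\pi$ by adding a brand-new vertex $v$ joined to the three vertices of $\bar G$ of degrees $d_1-1,d_2-1,d_3-1$; simplicity is automatic and $e(v,\bar G)=3\ge 2$ makes Lemma~\ref{le21}(8) apply. The only thing that genuinely needs checking in that branch is that $\bar\pi$ is not one of the exceptional sequences of Theorem~\ref{th12}, which you do flag. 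So: correct skeleton, but the proposal has a real gap at $d_3=3$, $d_2\ge 6$, and the ``main obstacle'' you propose to work on is not an obstacle at all.
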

\begin{proof}
Suppose, to the contrary, that $\pi=(d_1, \ldots, d_n)$ has no $Z_3$-connected realization with
 $n$ minimized, where $d_1=n-2$.  By Theorem~\ref{th13}, we may assume
that $d_{n-3}\le 3$. In order to prove our lemma, we need the following claim.

\medskip

\n{\bf Claim 1.} Each of the following holds.

(i) $d_{n-3}=d_{n-2}=d_{n-1}=d_n=3$;

(ii) $n\ge 6$.

\n{\em Proof of Claim 1.} (i) follows since
$d_n\ge 3$.

(ii) Since $d_n=3$, $n\ge 4$. If $n=4$, then $d_1=3=n-1$,  contrary to that $d_1=n-2$. If $n=5$, then $\pi=(3^5)$  is not  graphic  by Theorem~\ref{th20}. This proves Claim 1.

\medskip

 If $n=6$, then $d_1=4$ and $d_3=d_4=d_5=d_6=3$. By Theorem~\ref{th20}, $d_2=4$. By Lemma~\ref{fig2}, $\pi=(4^2, 3^4)$ has a $Z_3$-realization, a contradiction. Thus, we may assume that $n\geq 7$.

\medskip

\n{\bf Claim 2.}  $d_3=3$.

\n{\em Proof of Claim 2.}
Suppose otherwise that $d_3\ge 4$ and $G$ is a counterexample with $|V(G)|=n$ minimized. Then $d_2\ge d_3\ge 4$. Hence,
$\bar{\pi}=(n-3, d_2-1, d_3-1, d_4, \ldots, d_{n-1})$=$(\bar{d_1},  \ldots, \bar{d}_{n-1})$
with $\bar{d_1}\ge  \ldots \ge \bar{d}_{n-1}$. This implies that $\bar{d}_{n-1}\ge 3$ and $\bar{d_1}=(n-1)-2$ or $\bar{d_1}=d_4$.
In the former case, since $\bar{d_1}=n-3=(n-1)-2$, by the minimality of $n$, $\bar{\pi}$ has a $Z_3$-connected realization $\bar{G}$.
In the latter case,  $\bar{d_1}\neq n-3$ and hence $\bar{d_1}=d_4>n-3$. Since $d_1=n-2\geq d_4$, $d_4=n-2$. This means that
$\bar{d_1}=d_4=(n-1)-1$. By Theorem~\ref{th12}, either $\bar{\pi}$ has a $Z_3$-connected realization $\bar{G}$ or $\bar{\pi}=(k, 3^k)$, $(k^2, 3^{k-1})$, where $k$ is odd.
If $\bar{\pi}=(k, 3^k)$, then $d_1=k+1=n-2$ and $d_2=d_3=4$. On the other hand, $n=k+1+1=k+2$. This contradiction proves that $\pi\not=(k, 3^k)$. Similarly,
 $\bar{\pi}\not=(k^2, 3^{k-1})$.
If $\bar{\pi}$ has a $Z_3$-connected realization $\bar{G}$, then
$\pi$ has a realization $G$ of $\pi$ from $\bar{G}$ by adding a new vertex $v$ and three edges joining $v$
to the corresponding vertices of $\bar{G}$. By part (8) of Lemma~\ref{le21}, $G$ is $Z_3$-connected, a contradiction.
Thus $d_3\le 3$. Clearly, $d_3\ge 3$. Then $d_3=3$. This proves Claim 2.

\medskip

By  Claims 1 and 2,   $\pi=(n-2, d_2, 3^{n-2})$. Since $\pi$ is graphic, $d_2$ is even whenever $n$ is even or odd. Moreover, $d_2\ge 4$.
Recall that $n\ge 7$. In this case, $\pi=(n-2, 4, 3^{n-2})$. By (i) of Lemma~\ref{31}, $\pi$ has a
$Z_3$-connected realization $G$, a contradiction. Thus, we may assume that
 $d_2\ge 6$. Since $n-2=d_1\geq d_2\geq 6$, $n\ge d_2+2\ge 8$.

Consider the case that $n$ is even. Denote by $W_{n-d_2+2}$ an even wheel with the center at $v_1$  and by $S$ a vertex set such that $|S|=d_2-4$ and $V(W_{n-d_2+2})\cap S=\emptyset$. Note that $|S|$ is even. We construct a graph $G$ from $W_{n-d_2+2}$ and $S$ as follows: First, pick two vertices $s_1, s_2$ of $S$ and
   add $(d_2-6)/2$ edges such that the subgraph induced by $S\setminus\{s_1, s_2\}$ is a perfect matching. Second, let $v_1$ connect to each vertex of $S$. Third, pick a vertex $v_2$ in $W_{n-d_2+2}$ and let $v_2$ join to each vertex of $S$.
Finally, add one new
vertex $x$  adjacent to $v_2$, $s_1$ and $s_2$.

We claim that $G$ has a degree sequence $(n-2, d_2, 3^{n-2})$. Since  $d_{W_{n-d_2+2}}(v_1)=n-d_2+2 \ge 4$,
 $d(v_1)=n-d_2+2+d_2-4=n-2$, $d(v_2)=3+d_2-4+1=d_2$, each vertex of $V(G)\setminus\{v_1, v_2\}$ is a 3-vertex.  Since $W_{n-d_2+2}$ is an even wheel,
by part (5)  of Lemma~\ref{le21}, this wheel is $Z_3$-connected. By part (8) of Lemma~\ref{le21}, $G$ is $Z_3$-connected,
a contradiction.

Consider the case that $n$ is odd.  Denote by $W_{n-d_2+1}$ an even wheel with the center at $v_1$  and by $S$ a vertex set with $|S|=d_2-3$ and $V(W_{n-d_2+1})\cap S=\emptyset$. We construct a graph $G$ from $W_{n-d_2+1}$ and $S$  as follows: First, let $v_1$ connect to each vertex of $S$. Second, pick one vertex $v_2$ in $W_{n-d_2+1}$ and  let $v_2$ join to each vertex of $S$. Third, add one
vertex $x$  adjacent to three vertices of $S$. Finally, add $(d_2-6)/2$ edges in $S$ so that the subgraph induced by vertices of $S$, each of
which is not adjacent to $x$, is a perfect matching.
We claim that $G$ is a realization of degree sequence $(n-2, d_2, 3^{n-2})$. Since $d_{W_{n-d_2+1}}(v_1)=n-d_2+1 \ge 4$,
 $d(v_1)=n-d_2+1+d_2-3=n-2$.
Note that $d(v_2)=3+d_2-3=d_2$, each vertex of $V(G)\setminus\{v_1, v_2\}$ is a 3-vertex.
Similarly, it can be verified that $G$ is a $Z_3$-connected realization of $\pi$,
a contradiction.
\end{proof}

\medskip

\n \textbf{Proof of Theorem~\ref{th1}.}
Assume that $\pi=(d_1, \ldots, d_n)$ is a nonincreasing graphic sequence with  $d_1\geq n-3$. If $\pi$ is one of $(n-3,3^{n-1})$, $(k, 3^k)$ and $(k^2, 3^{k-1})$, then by Lemmas~\ref{le230} and \ref{34}, $\pi$ has no $Z_3$-connected realization.

 Conversely, assume that $\pi\notin \{(n-3, 3^{n-1}), (k,3^k), (k^2, 3^{k-1})\}$. Since $d_1\geq n-3$ and $d_n\geq 3$, $n\geq 6$. In the case that $n=6$, by Theorem ~\ref{th12}, $d_1=3, 4$. If $d_1=3$, then $\pi=(3^6)$.  Since $n=6$, $(3^6)=(n-3, 3^{n-1})$, contrary to our assumption.  If $d_1=4$, then by (ii) of Lemma~\ref{31} $\pi$ has a $Z_3$-connected realization. In the case that $n=7$, by Theorems~\ref{th12} and \ref{th20}, $4\leq d_1\leq 5$. If $d_1=5$, then any realization of $\pi$ contains the graph (b) of fig. 1. By Lemma~\ref{fig2}, $\pi$ has a $Z_3$-connected realization. Assume that $d_1=4$. Since $n=7$, $(4,3^6)=(n-3, 3^{n-1})$. Thus, by our assumption,  $\pi\not=(4, 3^6)$. In this case,   any realization of $\pi$ contains the graph (a) in Fig. 2.  By Lemma~\ref{fig3}, $\pi$ has a $Z_3$-connected realization.  Thus, assume that $n\geq 8$.

 By Theorem~\ref{th12} and by Lemmas~\ref{34} and \ref{41}, we are left to prove
that if $d_{1}=n-3, d_n\ge 3$ and $d_2\neq 3$, then $\pi$ has a $Z_3$-connected realization.
Suppose otherwise that $\pi=(d_1,  \ldots, d_n)$ satisfying
\begin{equation}\label{eq1}
\mbox{$d_1=n-3$, $d_2\neq 3$, $d_n\ge 3$.}
\end{equation}
Subject to (\ref{eq1}),
\begin{equation} \label{eq2}
\mbox{ $\pi$ has no $Z_3$-realization with $n$ minimized.}
\end{equation}

  We establish the following claim first.

\n \textbf{Claim 1.} (i) $d_{n-3}=d_{n-2}=d_{n-1}=d_n=3$.

(ii)  $3 \le d_3\le 4$.

\n \emph{Proof of Claim 1.} By Theorem~\ref{th13},
 $d_{n-3}\le 3$. (i) follows since $d_n\ge 3$.

(ii)
Suppose otherwise that subject to (\ref{eq1}) and (\ref{eq2}), $\pi$ satisfies $d_3\ge 5$. Since $d_2\ge d_3$, $d_2\ge 5$. Define
$\bar{\pi}=(n-4, d_2-1, d_3-1, d_4, \ldots, d_{n-1})$=$(\bar{d_1},  \ldots, \bar{d}_{n-1})$
with $\bar{d_1}\ge \ldots \ge \bar{d}_{n-1}$.  Since $d_3\geq 5$, $d_2-1\geq d_3-1\geq 4$. This means that $\bar{d_1}\geq \bar{d_2}\geq 4$, and $\bar{d}_{n-1}\ge 3$. If $d_1>d_4$, then
$\bar{d_1}=(n-1)-3$. In this case,
 by the minimality of $n$, $\bar{\pi}$ has a $Z_3$-connected realization $\bar{G}$.
If $d_1=d_4$, then $\bar{d_1}=d_4$. It follows that $d_4>n-4$. This implies that $d_1=d_2=d_3=d_4=n-3$. Thus, $\bar{d_1}=d_4=n-3=(n-1)-2$.
By Lemma~\ref{41}, $\bar{\pi}$ has a $Z_3$-connected realization $\bar{G}$. In either case,
$\pi$ has a realization $G$ obtained from $\bar{G}$ by adding a new vertex $v$ and three edges joining $v$
to the corresponding vertices of $\bar{G}$.
By part (8) of Lemma~\ref{le21}, $G$ is $Z_3$-connected, a contradiction.
Thus $d_3\le 4$. Since $d_3\ge 3$, $3\le d_3\le 4$. This proves Claim 1.

\medskip

By Claim 1, we may assume that $\pi=(n-3, d_2, d_3, \ldots, d_{n-4}, 3^4)$ with $d_3\in \{3, 4\}$. We consider the following two cases.

\medskip

\n \textbf{Case 1.} $d_3=3$.

\medskip

In this case,  $\pi=(n-3, d_2, 3^{n-2})$. Since $\pi$ is graphic, $d_2$ is odd. Since $d_2\not=3$, $d_2\ge 5$.
We first assume that $d_2=5$. In this case,  $\pi=(n-3, 5, 3^{n-2})$.
If $n=8$, by Lemma~\ref{fig2}, the graph (d) in Fig. 1 is
a $Z_3$-connected realization of $\pi=(5^2, 3^6)$.
Thus,  assume that $n\ge 9$.

 Assume that $n$ is odd. Denote by  $W_{n-5}$ an even wheel with the center at $v_1$ and by $S$ a vertex set with $|S|=2$. We construct graph $G$ from $W_{n-5}$ and $S$ as follows:
First, connect $v_1$ to each vertex of $S$. Second, choose one vertex $v_2$ in $W_{n-5}$ and add two
vertices $x_1, x_2$ such that $x_i$ is adjacent to $v_2$ and each vertex of $S$
for each $i\in \{1, 2\}$.

Since $d(v_1)=n-5+2=n-3$, $d(v_2)=3+2=5$ and each vertex of $V(G)\setminus\{v_1, v_2\}$ is a 3-vertex, this means that
$G$ is a realization of degree sequence $(n-3, 5, 3^{n-2})$.
By part (5) of Lemma~\ref{le21}, $W_{n-5}$ is $Z_3$-connected. Note that
$G/W_{n-5}$ is  an even wheel $W_4$ which is also $Z_3$-connected by Lemma~\ref{le21}. It follows by part (6) of  Lemma~\ref{le21} that  $G$ is $Z_3$-connected,
a contradiction.

 Thus we may assume that $n$ is even. Denote by  $W_{n-6}$ an even wheel with the center at $v_1$ and let $S=\{s_1, s_2, s_3\}$ be a vertex set.
We construct graph $G$ from $W_{n-6}$ and $S$ as follows:  First, connect $v_1$ to each vertex of $S$. Second, choose one vertex $v_2$ in $W_{n-6}$ and  and let $v_2$ be adjacent to $s_1$. Finally, add two
vertices $x_1, x_2$ such that $x_1$ is adjacent to $v_2$ and  $s_2$, $s_3$;
$x_2$ is adjacent to each vertex of $S$.

It is easy to verify that  $d(v_1)=n-6+3=n-3$,
 $d(v_2)=3+2=5$ and each vertex of $V(G)\setminus\{v_1, v_2\}$ is a 3-vertex. This means that $G$ is a realization of degree sequence $(n-3, 5, 3^{n-2})$.
By (5) of Lemma~\ref{le21}, $W_{n-6}$ is $Z_3$-connected. By part (8) of Lemma~\ref{le21}, $W_{n-6}\cup  \{s_1\}$ is $Z_3$-connected.
Note that $G/\{W_{n-6}\cup  \{s_1\}\}$ is an even wheel $W_4$ which is $Z_3$-connected. It follows by (6) of Lemma~\ref{le21} that $G$ is $Z_3$-connected,
a contradiction.

 From now on, we assume that $d_2\ge 7$.  In this case, $n\ge d_2+3\ge 10$. Consider the case that $n$ is even. Denote by  $W_{n-d_2+1}$ an even wheel with the center at $v_1$ and by $S$ a vertex set with $|S|=d_2-4$.
We construct graph $G$ from $W_{n-d_2+1}$ and $S$  as follows: First, connect $v_1$ to each vertex of $S$. Second, pick one vertex $s$ of $S$ and let $S_1=S\setminus\{s\}$,
pick one vertex  $v_2$ in $W_{n-d_2+1}$ and connect $v_2$ to each vertex of $S_1$.
Third, pick two vertices $s_1, s_2$ of $S_1$, and add $(d_2-7)/2$ edges such that the induced subgraph by $S_1\setminus\{s_1, s_2\}$ is a perfect matching.
Finally, we add two
vertices $x_1$ and $x_2$ such that $x_i$ is adjacent to $v_2$, $x_i$ is adjacent to $s_i$ for $i=1, 2$ and $s$ is adjacent to each of $x_1$  and $x_2$.

 Since $d(v_1)=n-d_2+1+d_2-4=n-3$,  $d(v_2)=3+d_2-5+2=d_2$ and each vertex of $V(G)-\{v_1, v_2\}$ is a 3-vertex, this implies s that $G$ is a realization of degree sequence  $(n-3, d_2, 3^{n-2})$.
By (5) of Lemma~\ref{le21}, $W_{n-d_2+1}$ is $Z_3$-connected. Contracting this even wheel $W_{n-d_2+1}$ and contracting all 2-cycles generated in the process, we get  $K_1$. By (8) of Lemma~\ref{le21}, $G$ is $Z_3$-connected,
a contradiction.

Consider the case that $n$ is odd. Denote by  $W_{n-d_2}$ an even wheel with the center at $v_1$ and by $S$ a vertex set with $|S|=d_2-3$. We construct a graph from $W_{n-d_2}$ and $S$ as follows.
First, let $v_1$ be adjacent to each vertex of $S$. Second, pick two vertices $s_3$ and $s_4$ of $S$, define $S_1=S\setminus\{s_3, s_4\}$ and
pick one vertex $v_2$ in $W$ so that $v_2$ is adjacent to each vertex of $S_1$. Third, add two new
vertex $x_1, x_2$ such that $x_i$ is adjacent to each of  $v_2$, $s_3$ and $s_{4}$.  Finally, add $(d_2-5)/2$ edges in $S_1$ so that the subgraph induced by vertices of $S_1\setminus\{s_3, s_4\}$
is a perfect matching.

 Since $d(v_1)=n-d_2+d_2-3=n-3$, $d(v_2)=3+d_2-5+2=d_2$, and each vertex of $V(G)\setminus\{v_1, v_2\}$ is a 3-vertex,  $G$ is a realization of degree sequence  $(n-3, d_2, 3^{n-2})$. Similarly,
by parts (5) and (8) of Lemma~\ref{le21}, $G$ is $Z_3$-connected,
a contradiction.

\medskip

\n \textbf{Case 2.} $d_3=4$.

\medskip

In this case, $d_2\ge 4$ and $\pi =(n-3, d_2, 4, d_4, \ldots, d_{n-4}, 3^4)$.
Define $\bar{\pi}=(n-4, d_2-1, 3, d_4, \ldots, d_{n-4}, \\3^3)=(\bar{d}_1, \ldots, \bar{d}_{n-1})$ with $\bar{d}_1\ge \ldots \ge \bar{d}_{n-1}$. If $d_1=d_4$, then $n-3=4$ and hence $n=7$, contrary to assumption that $n\geq 8$. Thus, $d_1>d_4$. In this case, $\bar{d}_1=n-4$.

\medskip

 \n{\bf Claim 2.} $d_2=4$.

 \medskip

\n{\em Proof of Claim 2.}  Suppose otherwise that $d_2\ge 5$. Then $\bar{d_2}\ge 4$ and $\bar{\pi}$ satisfies (\ref{eq1}). By the minimality of $n$, $\bar{\pi}$ has a $Z_3$-connected realization $\bar{G}$.
Thus,  we conclude that $G$ is a $Z_3$-connected realization of $\pi$ obtained from $\bar{G}$ by adding a new vertex $v$ and three edges joining $v$
to the corresponding vertices of $\bar{G}$.
This contradiction proves Claim 2.

\medskip

By Claim 2, $d_2=4$. Assume that $i\in \{3, \ldots, n-4\}$ such that $d_i=4$ and $d_{i+1}=3$.
Thus $\pi=(n-3, 4^{i-1}, 3^{n-i})$.

\medskip

\n{\bf Claim 3.} $i=3$.

\medskip

\n{\em Proof of Claim 3.} If $i$ is even, then $n-i$ is odd (even) when $n$ is odd (even).
No matter whether  $n$ is odd or even,  there are odd vertices of odd degree, a contradiction. Thus, $i$ is odd.
If $i\ge 5$, then $\bar{\pi}=(n-4, 4^{i-3}, 3^{n-i+1})$ satisfies (\ref{eq1}). Recall that $n\ge 8$,
 by the minimality of $n$, $\bar{\pi}$ has a
$Z_3$-connected realization $\bar{G}$. In this case, we can obtain a realization $G$ of $\pi$ from $\bar{G}$
by adding a new vertex $v$ and three edges joining $v$
to the corresponding vertices of $\bar{G}$.
By (8) of Lemma~\ref{le21}, $G$ is $Z_3$-connected, a contradiction.
 This proves Claim 3.

\medskip

By Claim 3, $i=3$. This leads to that $\pi=(n-3, 4^2, 3^{n-3})$. Recall that $n\geq 8$.
If $n=8$, then by Lemma~\ref{fig3}, $\pi$ has a $Z_3$-connected realization. Thus, we may assume that $n\geq 9$.

In the case that $n$ is odd,  denote by $W_{n-5}$  the even wheel with the center at $v_1$ and by $S$ a vertex set with $|S|=2$. We construct a graph $G$ from $W_{n-5}$ and $S$ as follows. First, let $v_1$ be adjacent to each vertex of $S$. Second, pick two vertices $v_2, v_3$ in $W_{n-5}$ and add two
vertices $x_1, x_2$ such that $x_i$ is adjacent to $v_{i+1}$ and each vertex of $S$
for each $i\in \{1, 2\}$.

It is easy to verify that  $d(v_1)=n-5+2=n-3$, $d(v_i)=3+1=4$ for each $i\in \{2,3\}$, and
each vertex of $V(G)-\{v_1, v_2, v_3\}$ is a 3-vertex.
Obviously,
$G$ has a degree sequence $(n-3, 4^2, 3^{n-3})$.
By (5) of Lemma~\ref{le21}, $W_{n-5}$ is $Z_3$-connected.
$G/W_{n-5}$ is an even wheel $W_4$ which is $Z_3$-connected.  By (6) of Lemma~\ref{le21}, $G$ is $Z_3$-connected,
a contradiction.

In the case that $n$ is even,  denote by $W_{n-6}$  the even wheel with the center at $v_1$ and let $S=\{s_1, s_2, s_3\}$. We construct a graph $G$ from $W_{n-6}$ and $S$ as follows. First, let $v_1$ be adjacent to each vertex of $S$. Second, pick two vertices $v_2, v_3$ in $W_{n-6}$ so that $v_2$ is adjacent to $s_1$. Finally, we add two
vertices $x_1, x_2$ such that $x_1$ is adjacent to $v_3$ and  $s_2$, $s_3$ and such that
$x_2$ is adjacent to each vertex of $S$.

It is easy to verify that $d(v_1)=n-6+3=n-3$, $d(v_i)=3+1=4$ for each $i\in \{2, 3\}$, and each vertex of $V(G)-\{v_1, v_2, v_3\}$ is a 3-vertex. Obviously,
$G$ has a degree sequence $(n-3, 4^2, 3^{n-3})$.
By (5) Lemma~\ref{le21}, $W_{n-6}$ is $Z_3$-connected. By (8) of Lemma~\ref{le21}, $W_{n-6}\cup \{s_1\}$ is $Z_3$-connected.
$G/\{W_{n-6}\cup \{s_1\}\}$ is  an even wheel $W_4$ which is $Z_3$-connected. By (6) of Lemma~\ref{le21},  $G$ is $Z_3$-connected
a contradiction. $\blacksquare$

\section{Proof of Theorem~\ref{th2}}

We first establish the following lemma which is used in the proof of Theorem~\ref{th2}.
\begin{lemma}
\label{51}
Let $\pi=(d_1,  \ldots, d_n)$ be a nonincreasing graphic sequence.
If $d_n\ge 3$ and $d_{n-4}\ge 4$, then either $\pi$ has a $Z_3$-connected
realization or $\pi= (5^2, 3^4)$.
\end{lemma}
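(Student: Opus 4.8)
The plan is to mimic the structure used in Lemma~\ref{41} and in the proof of Theorem~\ref{th1}: argue by contradiction, take a counterexample $\pi=(d_1,\ldots,d_n)$ with $n$ minimized, and use the residual-sequence operation of Theorem~\ref{th20} together with part~(8) of Lemma~\ref{le21} to reduce to a small list of sequences. First I would invoke Theorem~\ref{th13} to assume $d_{n-3}\le 3$, so that by $d_n\ge 3$ we have $d_{n-3}=d_{n-2}=d_{n-1}=d_n=3$, while $d_{n-4}\ge 4$. Combined with $d_1\ge d_2\ge\cdots$, this forces $\pi=(d_1,d_2,\ldots,d_{n-4},3^4)$ with $d_{n-4}\ge 4$ and $d_1\le n-1$. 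Next I would split on the size of $d_1$: if $d_1\ge n-3$, then Theorem~\ref{th1} (already proved) finishes the job, since the excluded sequences there — $(n-3,3^{n-1})$, $(k,3^k)$, $(k^2,3^{k-1})$ — all have at least $n-1$ vertices of degree $3$, contradicting $d_{n-4}\ge 4$ except in tiny cases which I would check by hand (this is where $(5^2,3^4)$ must appear: for $n=6$, $d_{n-4}=d_2\ge 4$ with $d_1=n-2=4$ gives $(4^2,3^4)$, handled by Lemma~\ref{fig2}, whereas $d_1=n-1=5$ with $d_2\ge 4$ gives $(5,4,3^4)$, $(5^2,3^4)$, etc., and $(5^2,3^4)$ is the genuine exception, a realization of which is not $Z_3$-connected — indeed it is not even $3$-flowable by a parity count, which I would verify directly).

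So the substance is the case $d_1\le n-4$, i.e.\ $d_1=n-4$ (it cannot be smaller since $d_1\ge d_{n-4}\ge 4$ would still be consistent, but then every $d_i\le n-4$, and a count against $n-4$ handles it — actually I must be careful: I would show $d_1=n-4$ using that $\sum d_i$ is even and the sequence is graphic, ruling out $d_1\le n-5$ by an Erd\H{o}s--Gallai-type argument or by noting that with only four $3$'s and everything else $\ge 4$ the sum is large). Once $d_1=n-4$, I would apply the residual-sequence trick: delete the last vertex (a $3$-vertex) to get $\bar\pi=(d_1-1,\ldots,d_{d_n}-1,d_{d_n+1},\ldots,d_{n-1})$, and show that after reordering $\bar\pi$ has $\bar d_1\ge (n-1)-4$ unless some degeneracy occurs. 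If $\bar\pi$ still satisfies the hypotheses of the lemma and $\bar\pi\ne(5^2,3^4)$, minimality of $n$ gives a $Z_3$-connected realization $\bar G$, and adding back the deleted vertex with its three edges keeps $Z_3$-connectivity by Lemma~\ref{le21}(8); if $\bar\pi=(5^2,3^4)$, then $n=7$ and $\pi$ is a small explicit sequence like $(4^3,3^4)$ or $(5,4^2,3^5)$ handled by Lemmas~\ref{fig3} and~\ref{31}. The delicate point, exactly as in Claims~1--3 of the Theorem~\ref{th1} proof, is controlling how many $4$'s there are: parity of $\sum d_i$ forces the number of odd-degree vertices to be even, which constrains the count of $3$'s and hence the shape of $\pi$.

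When the induction does not immediately apply — typically because subtracting $1$ from the top entries would drop $\bar d_1$ below $(n-1)-4$, which happens only when $d_2$ (or $d_3$) is close to $d_1$, or when $d_1$ is too small relative to $n$ for $\bar\pi$ to still have a vertex of degree $\ge n-5$ — I would instead construct a $Z_3$-connected realization directly, using the building blocks from Lemma~\ref{31} (the sequences $(n-2,4,3^{n-2})$, $(4^{n-4},3^4)$, $(5,4^{n-6},3^5)$) glued along an even wheel $W_k$ via Lemma~\ref{le21}(5),(6),(8): take an even wheel on the high-degree vertex $v_1$, attach a small gadget (a $K_4^-$ or one of the Fig.~1/Fig.~2 graphs) realizing the leftover degrees, and contract the wheel to $K_1$. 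The main obstacle I anticipate is organizing this explicit-construction case analysis so it covers all residual shapes $\pi=(n-4,4^{a},3^{b})$ (and the mild generalizations where a few middle entries exceed $4$) with the right parity of $a$; the $Z_3$-connectivity of each constructed graph is routine given Lemma~\ref{le21}, but enumerating the cases without gaps — and correctly isolating $(5^2,3^4)$ as the unique exception — is the part that requires care.
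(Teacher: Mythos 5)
Your skeleton (minimal counterexample, residual-sequence reduction via Theorem~\ref{th20} and Lemma~\ref{le21}(8), explicit wheel-based constructions for the irreducible cases, with $(5^2,3^4)$ isolated at $n=6$) is the same as the paper's. But there is a genuine error at the pivot of your argument: the claim that $d_1\le n-4$ forces $d_1=n-4$ is false, and no parity or Erd\H{o}s--Gallai argument can rescue it. The sequences $(4^{n-4},3^4)$ and, more generally, $(d_1,4^{n-5},3^4)$ with $d_1$ even and $4\le d_1\le n-4$ are all graphic, all satisfy the hypotheses, and all have $d_1$ possibly far below $n-4$. These are precisely the sequences that survive the residual reduction, and handling them is the bulk of the paper's proof: $(4^{n-4},3^4)$ is dispatched by Lemma~\ref{31}(ii), and $(d_1,4^{n-5},3^4)$ with $d_1\ge 6$ even requires three separate explicit constructions according to whether $n-d_1-1$ equals $3$, equals $4$, or is at least $5$ (even wheels with attached paths of $3$-vertices, plus a triangularly connected $4$-regular gadget $H'$ in the last case). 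Your proposal defers exactly this content to ``the part that requires care.''

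A second, related slip: you state the induction fails when $\bar d_1$ drops below $(n-1)-4$, i.e.\ when $d_2$ or $d_3$ is \emph{close to} $d_1$. But the lemma's hypothesis places no lower bound on $d_1$; the invariant that must be preserved is $\bar d_{(n-1)-4}\ge 4$, and the reduction breaks precisely when $d_2=4$ or $d_3=4$ (decrementing a $4$ to a $3$ creates a fifth $3$-entry), i.e.\ when the middle degrees are \emph{small}, not when they are close to $d_1$. This is why the residual core is $(d_1,4^{n-5},3^4)$ rather than anything pinned to $d_1=n-4$. The identification of $(5^2,3^4)$ as the exception at $n=6$ is correct, but as written the proposal both asserts a false reduction and omits the constructions that the correct reduction makes unavoidable.
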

\begin{proof}
Since $d_{n-4}\geq 4$, $n\geq 5$. If $n=5$, then by Theorem~\ref{th13}, $\pi=(4, 3^4)$. In this case, an even wheel $W_4$ is a $Z_3$-connected realization of $\pi$. If $n=6$, then by Theorem~\ref{th13}, $\pi=(4^2, 3^4)$ or $(5^2, 3^4)$. If $\pi=(4^2, 3^4)$, then by Lemma~\ref{fig2}, the graph (a) shown in Fig.1 is a $Z_3$-connected realization of $\pi$. If $n=7$, then by Theorem~\ref{th13}, $\pi=(5^2, 4, 3^4)$. Let $G$ be the graph (b) shown in Fig. 1 which has degree sequence  $\pi=(5, 4, 3^5)$. Denote by $G'$ the graph obtained from $G$ by adding an edge joining a vertex of degree 3 to a vertex of degree 4. By Lemma~\ref{fig2}, $G$ is a $Z_3$-connected realization of $(5, 4, 3^4)$ and so $G'$ is a $Z_3$-connected realization of $(5^2, 4, 3^4)$. Thus, assume that $n\geq 8$.

By Theorems~\ref{th13} and ~\ref{th1}, it is sufficient to prove that
if $d_{n-3}=3$, $d_{n-4}\ge 4$ and $d_1\le n-4$, then
$\pi$ has a $Z_3$-connected realization. In this case, $\pi=(d_1,  \ldots, d_{n-4}, 3^4)$.
 Suppose, to the contrary, that $\pi=(d_1,  \ldots, d_n)$ satisfies
\begin{equation}\label{eq3}
\mbox {$d_{n-3}=3$, $d_{n-4}\ge 4$ and $d_1\le n-4$.}
\end{equation}
Subject to (\ref{eq3}),
\begin{equation}
\label{eq4}
\mbox{$\pi$ has no $Z_3$-connected realization  with  $n$ minimized.}
\end{equation}

Assume that $d_{n-4}\ge 5$. Define $\bar{\pi}=(d_1-1, d_2-1, d_3-1, d_4, \ldots, d_{n-4}, 3^3)
=(\bar{d_1},  \ldots, \bar{d}_{n-1})$. Since $d_1, d_2, d_3\ge 5$ and $n\geq 8$,
$\bar{d}_{n-5}\ge 4$. This implies that $\bar{\pi}$ satisfies (\ref{eq3}), by the minimality of $n$, $\bar{\pi}$ has a $Z_3$-connected realization $\bar{G}$.
Thus, we construct  a realization $G$ of $\pi$ from $\bar{G}$ by adding a new vertex $v$ and three edges joining $v$
to the corresponding vertices of $\bar{G}$. It follows by (8) of Lemma~\ref{le21} that  $G$ is $Z_3$-connected, a contradiction.
Thus, we may assume that $d_{n-4}=4$.

On the other hand, if $d_1=4$, then $\pi=(4^{n-4}, 3^4)$. By Lemma~\ref{31},
$\pi$ has a $Z_3$-connected realization,
a contradiction. Thus, assume $d_1\ge 5$. Since $d_{n-4}=4$ and $n\geq 8$, $d_2\geq 5$ or $d_2=4$.

In the former case,  $\bar{d}_{n-5}\ge 4$. In this case, $\bar{\pi}$ satisfies (\ref{eq3}),
by the minimality of $n$,
$\bar{\pi}$ has a $Z_3$-connected realization $\bar{G}$.
Thus, we can construct a realization $G$ of $\pi$ from $\bar{G}$ by adding a new vertex $v$ and three edges joining $v$
to the corresponding vertices of $\bar{G}$. By (8) of Lemma~\ref{le21}, $G$ is $Z_3$-connected, a contradiction.

In the latter case,  $\pi=(d_1, 4^{n-5}, 3^4)$.
Since $\pi$ is graphic, $d_1$ is even. Since $d_1\le n-4$, $n-d_1-1\ge 3$.
In the case that $n-d_1-1=3$, we have $d_1=n-4$ and $n\ge 10$ is even. Denote by  $W_{n-4}$ an even wheel with the center at $v_1$. We construct a graph $G$ from $W_{n-4}$ as follows. First,
choose five vertices $v_2, v_3, v_4, v_5, v_6$ of $W_{n-4}$. Second, add three vertices $x_1, x_2, x_3$ and
edges $x_1x_2$, $x_2x_3$. Finally, add edges $v_2x_1, v_3x_1, v_4x_2, v_5x_3, v_6x_3$. In this case, for each $i\in \{1, 2, 3\}$, $x_i$ is a 3-vertex, and  for each $j\in \{ 2,  \ldots, 6\}$ $v_j$   is a 4-vertex.

It is easy to see that $G$ is a $Z_3$-connected realization of degree sequence $(n-4, 4^5, 3^{n-6})$.
Define $S=V(W_{n-4})\setminus\{v_2, \ldots, v_6\}=\{v_1, v_7,  \ldots, v_{n-3}\}$. Then $|S|=n-4-5=n-9\ge 1$.
If $n=10$, then $G$ is a realization of $(6, 4^{5}, 3^4)$. If $n\ge 12$, then define $G'$ from $G$ by adding $v_jv_{n-j+4}$ for $7\leq j\leq \frac{n}{2}+1$, that is, adding $(n-10)/2$ edges
in $S$. Obviously,
$G'$ has a degree sequence $(n-4, 4^{n-5}, 3^4)$. We conclude that $G'$ is  a $Z_3$-connected
realization of $\pi$.

In the case that $n-d_1-1=4$, we have $d_1=n-5$ and $n\ge 11$ is odd. Denote by  $W_{n-5}$ an even wheel with the center at $v_1$.
We construct a graph $G$ from $W_{n-5}$ as follows. First, choose six vertices $v_2, v_3, v_4, v_5, v_6, v_7$ of $W_{n-5}$. Second,
add four vertices $x_1, x_2, x_3, x_4$ and
edges $x_1x_2$, $x_2x_3$, $x_3x_4$. Third, add edges $x_1v_2, x_1v_3, x_2v_4, x_3v_5, x_4v_6,x_4v_7$. In this case, for each $i\in \{1, 2, 3, 4\}$, $x_i$ is a 3-vertex, and  for each $j\in \{ 2, 3, \ldots, 7\}$
$v_j$   is a 4-vertex.

 It is easy to see that $G$ is a  $Z_3$-connected realization of
degree sequence $(n-5, 4^6, 3^{n-7})$.
Define $S=V(W_{n-5})\setminus\{v_1, v_2, \ldots, v_7\}=\{v_8, \ldots, v_{n-4}\}$. Then $|S|=n-5-6=n-11\ge 0$.
If $n=11$, then $G$ is a realization of $(6, 4^{6}, 3^4)$. If $n\ge 13$, then define $G'$ from $G$ by  adding edges $v_jv_{n-j+4}$ for $8\leq j\leq \frac{n+3}{2}$, that is, adding $(n-11)/2$ edges
in $S$. Obviously,
$G'$ is a $Z_3$-connected realization of  degree sequence $(n-5, 4^{n-5},3^4)$, a contradiction.

In the case that $n-d_1-1\ge 5$,
denote by $W_{d_1}$  an even wheel with the center at $v_0$. Let $V(W_{d_1})=\{v_0, v_1,  \ldots, v_{d_1}\}$.
  Let $C:u_1\ldots u_{n-d_1-1}u_1$ be a cycle of length $n-d_1-1$ and
and define a graph $H$ obtained from $C$ adding edges $u_iu_{i+2}$ for each $i\in \{1,  \ldots, n-d_1-1\}$, where the subscripts are taken modular $n-d_1$.
Clearly, $H$ is a 4-regular and is triangularly connected.
Define $H'=H-\{u_{2}u_{n-d_1-1}\}$. Now we prove $H'$ is $Z_3$-connected. Clearly, $H'_{[u_1u_2,u_1u_3]}$
is triangularly connected and contains a 2-circuit $u_2u_3u_2$. By Lemma~\ref{triangle} (a) and
Lemma~\ref{le21} (3), $H'_{[u_1u_2, u_1u_3]}$ is $Z_3$-connected, and hence $H'$ by Lemma~\ref{le22}. We construct  a graph $G$ from $W_{d_1}$ and $H'$ as follows.
If $d_1\ge 8$, then we add two edges $v_1u_{n-d_1-1}$, $v_2u_2$
and add edges $v_jv_{d_1-j+3}$ for $3\leq j\leq \frac{d_1}{2}-1$, that is, add $(d_1-6)/2$ edges between vertices $\{v_3,  \ldots, v_{d_1-4}\}\setminus \{v_{\frac{d_1}{2}}, v_{\frac{d_1}{2}+1}, v_{\frac{d_1}{2}+2}, v_{\frac{d_1}{2}+3}\}$ such that $d(v_i)=4$ for each vertex of $\{v_3,  \ldots, v_{d_1-4}\}\setminus \{v_{\frac{d_1}{2}}, v_{\frac{d_1}{2}+1}, v_{\frac{d_1}{2}+2}, v_{\frac{d_1}{2}+3}\}$ and the new graph is simple.
If $d_1=6$, then we add two edges $v_1u_{n-d_1-1}$, $v_2u_2$.
In either case, $G$ is a $Z_3$-connected realization of
has a degree sequence $(d_1, 4^{n-5}, 3^4)$, a contradiction
\end{proof}

\medskip

\n \textbf{Proof of Theorem~\ref{th2}.} If $\pi=(5^2, 3^4)$ or $(5, 3^5)$, then by Lemma~\ref{le230}, $\pi$ has no $Z_3$-connected realization. Thus, assume that $\pi\not=(5^2, 3^4), (5, 3^5)$.

Since $d_{n-5}\geq 4$, $n\geq 6$. If $n=6$, then by Theorem~\ref{th13}, $\pi=(d_1, d_2, 3^4)$. By Lemma~\ref{51}, $\pi=(d_1, 3^5)$. By our assumption that $d_{n-5}\geq 4$, $\pi=(5, 3^5)$, a contradiction. If $n=7$, then by Theorem~\ref{th13} and Lemma~\ref{51}, $\pi=(d_1, d_2, 3^5)$. By our assumption that $d_{n-5}\geq 4$, $\pi=(5,4, 3^5)$ or $(6, 5, 3^5)$. In the former, the graph (b) in Fig. 1 is a $Z_3$-connected realization of $\pi$. In the latter case, Theorem~\ref{th1} shows that $\pi$ has a $Z_3$-connected realization. Thus, we may  assume that $n\geq 8$.

By Theorems~\ref{th13} and \ref{th1}, and Lemma~\ref{51},
it is sufficient to prove that if $d_{n-4}=3$, $d_{n-5}\ge 4$ and $d_1\le n-4$, then
$\pi$ has a $Z_3$-connected realization. Then $\pi=(d_1,  \ldots, d_{n-5}, 3^5)$.
 Suppose to the contrary that $\pi$ satisfies
\begin{equation}\label{eq5}
\mbox{ $d_{n-4}=3$, $d_{n-5}\ge 4$ and $d_1\le n-4$.}
\end{equation}
Subject to (\ref{eq5}),
\begin{equation}\label{eq6}
\mbox{$\pi$ has no $Z_3$-connected realization  with  $n$ minimized.}
\end{equation}

We claim that $d_3=4$. Suppose otherwise that  $d_{3}\ge 5$. Define $\bar{\pi}=(d_1-1, d_2-1, d_3-1, d_4, \ldots, d_{n-5}, 3^4)
=(\bar{d}_1, \ldots, \bar{d}_{n-1})$. Since $d_1, d_2, d_3\ge 5$,
$\bar{d}_{n-6}\ge 4$. Thus $\bar{\pi}$ satisfies (\ref{eq5}). By the minimality of $n$, $\bar{\pi}$ has a $Z_3$-connected realization $\bar{G}$. Denote by $G$ the graph obtained from $\bar{G}$ by adding a new vertex $v$ and three edges joining $v$
to the corresponding vertices. It follows by (8) of Lemma~\ref{le21} that $G$ is a $Z_3$-connected realization of $\pi$, a contradiction.
Thus,  $d_3=4$ and $\pi=(d_1, d_2, 4^{n-7}, 3^5)$.

We claim that $d_2=4$. Suppose otherwise that  $d_2\ge 5$.  In this case, $\bar{\pi}=(d_1-1, d_2-1, 4^{n-8}, 3^6)
=(\bar{d}_1,  \ldots, \bar{d}_{n-1})$. Since $d_2\ge 5$ and $d_3=4$, $n\ge 3+5=8$. This implies that
$\bar{d}_{n-6}\ge 4$. Thus, $\bar{\pi}$ satisfies (\ref{eq5}). By the minimality of $n$,
$\bar{\pi}$ has a $Z_3$-connected realization $\bar{G}$. Denote by $G$ the graph obtained
 from $\bar{G}$ by adding a new vertex $v$ and three edges joining $v$
to the corresponding vertices of $\bar{G}$. It follows from (8) of Lemma~\ref{le21} that $G$ is a $Z_3$-connected realization of $\pi$, a contradiction.
Thus, $d_2=4$ and $\pi=(d_1, 4^{n-6}, 3^5)$.

Since $\pi$ is graphic, $d_1$ is odd and $d_1\ge 5$.
If $d_1=5$, then by (iii) of Lemma~\ref{31}, $\pi$ has a $Z_3$-connected realization.

We are left to the case that $d_1\ge 7$. Since $d_1\le n-4$, $n\ge d_1+4 \ge 11$. By (iii) of
Lemma~\ref{31}, Let $G'$ be a $Z_3$-connected realization of
degree sequence $(5, 4^{n-6}, 3^5)$. By the construction of $G'$ in (iii) of Lemma~\ref{31},
$G'$ has at least $|E(G')|-5-14=2n-21\ge 2(d_1+4)-21=2d_1-13\ge (d_1-5)/2$ edges
not incident with the any vertex of $N_{G'}(u)\cup \{u\}$, where $u$ is a 5-vertex in $G'$ since $G'$ contains a pair of adjacent neighbors of $u$.
Choose $(d_1-5)/2$ such edges, say $u_iv_i$ for each $i\in \{1,  \ldots, (d_1-5)/2)\}$. Denote by the graph $G$ from $G'$ by
deleting edges $u_iv_i$ and adding edge $uu_i$, $uv_i$ for each $i\in \{1, \ldots, (d_1-5)/2\}$.
It follows by Lemma~\ref{le22} that $G$ is a $Z_3$-connected realization of  degree sequence $(d_1, 4^{n-6}, 3^5)$, a contradiction. We complete our proof.

\small

\end{document}